\newtheorem{theorem}{Theorem}[section]
\newtheorem{lemma}[theorem]{Lemma}
\newtheorem{proposition}[theorem]{Proposition}
\theoremstyle{definition}
\theoremstyle{remark}
\newtheorem{remark}[theorem]{Remark}
\numberwithin{equation}{section}
\newcommand{\eps}{\varepsilon}
\newcommand{\R}{\mathbb{R}}
\journal{}
\begin{document}

\begin{frontmatter}

\title{On the Schr\"{o}dinger-Poisson system with indefinite potential and $3$-sublinear nonlinearity\tnoteref{tle}}
\tnotetext[tle]{The first author was supported by NSFC (Grant No.11671331), the second author by project PRIN 2017 ``Nonlinear Differential Problems via Variational, Topological and Set-valued Methods'' (2017AYM8XW) and grant PdR 2016-2018-2020 ``Metodi Variazionali ed Equazioni Differenziali'' of the University of Catania.}
\author[xmu]{Shibo Liu}
\ead{liusb@xmu.edu.cn}
\author[cat]{Sunra Mosconi\corref{cor}}
\ead{mosconi@dmi.unict.it}
\cortext[cor]{Corresponding author.}

\address[xmu]{School of Mathematical Sciences, Xiamen University, Xiamen 361005, China}
\address[cat]{Dipartimento di Matematica e Informatica, Universit\`a degli Studi di Catania\\
Viale A.\ Doria 6, 95125 Catania, Italy}

\begin{abstract}
We consider a class of stationary Schr\"{o}dinger-Poisson systems with a general nonlinearity $f(u)$ and coercive sign-changing potential $V$ so that the Schr\"{o}dinger operator $-\Delta+V$ is indefinite. Previous results in this framework required $f$ to be strictly $3$-superlinear, thus missing the paramount case of the Gross-Pitaevskii-Poisson system, where $f(t)=|t|^{2}t$; in this paper we fill this gap, obtaining non-trivial solutions when $f$ is not necessarily $3$-superlinear.
\end{abstract}

\begin{keyword}

\MSC 35J60 \sep 58E05

\end{keyword}

\end{frontmatter}

%\linenumbers

\section{Introduction}

The dynamic of a Bose-Einstein condensate can be described (see \cite{MR2680421, MR2143817}) by the Gross-Pitaevskii equation
\[
i\, \partial_{t}\psi=-\Delta\psi+ V\, \psi+g\,|\psi|^{2}\psi
\]
where $\psi:\R^{3}\times [0, +\infty [ \to \mathbb{C}$ is the wave function of the condensate, $V=V(x)$ is the potential, $|\psi|^{2}$ is the particle-density, whose integral gives the total (large) number of particles $N$ and $g$ is related to the scattering length of the mutual short-range atomic interaction (resulting in positive $g$ for repulsive interaction and negative for attractive ones). The Gross-Pitaevskii equation is a particular case of the nonlinear Schr\"{o}dinger equation
\[
i\, \partial_{t}\psi=-\Delta\psi+ V\, \psi+g\, |\psi|^{p-1}\psi
\]
for $p>1$, which is usually called {\em focusing} NLS if $g<0$ and {\em defocusing} NLS if $g>0$.
If the particles are electrically charged, long-range electrostatic interaction can be effectively modelled by a potential term (see \cite{MR1890644} for a formal justification), so that  $V=V_{\rm ext} +\phi$, where $V_{\rm ext}$ is the external potential and $\phi$ is the electrostatic potential determined by the Poisson equation with charge density $k |\psi|^{2}$ (typically,  $k>0$ giving repulsive interactions).
 This gives rise to the Schr\"odinger-Poisson system
\begin{equation}
\label{SP00}
\begin{cases}
i\, \partial_{t}\psi=-\Delta\psi+ (V_{\rm ext} +\phi) \psi+g\, |\psi|^{p-1}\psi\\[8pt]
-\Delta \phi=k\, |\psi|^{2}
\end{cases}
\end{equation}
which has been object of extensive studies in the last decades. Assuming vanishing boundary conditions at infinity, the total energy
\[
E:=\int \frac{1}{2}|\nabla \psi|^{2}+ \frac{V_{\rm ext}}{2}|\psi|^{2}+\frac{k}{4}\, |\nabla\phi|^{2}+\frac{g}{p+1}|\psi|^{p+1}\, dx
\]
is conserved along the motion, as well as the total mass $N=|\psi|_2^{2}$, where $|\cdot|_q$ stands for the $L^q$-norm over $\mathbb{R}^3$. A particularly interesting case of the previous system is when long- and short-range mutual strengths compete with each other, for example when $k>0$ (repulsive electrostatic interaction) and $g<0$ (short-range binding). This is the case for a Bose-Einstein condensate of charged ions with attractive interatomic interaction, trapped in a potential well.

Standing waves for \eqref{SP00} are obtained through the ansatz $\psi(x, t)=e^{-i\omega t}u(x)$ with $u:\R^{3}\to \R$, giving
\begin{equation}
\label{e1}
\begin{cases}
-\Delta u+V\, u+\phi\, u - f(u)=0\\
-\Delta\phi= u^{2}
\end{cases}
\end{equation}
where we set $f(t)=|t|^{p-1}t$, $V=V_{\rm ext}-\omega$ and $k=1$ for simplicity of notation.
Conservation of total energy $E$ and mass $N$ gives the relation $\omega\, N=E$, so that $\omega$ is the energy per particle of the standing wave. A natural question, to which we will give a positive answer in the present paper, is wether standing waves of arbitrarily large energy per particle can occur. Notice that for large values of $\omega$, the potential $V=V_{\rm ext}-\omega$ is sign-changing and the linearisation of the first equation  turns out to be an indefinite Schr\"{o}dinger operator.

From the mathematical point of view, the energy functional $\mathcal{E}%
:H^{1}(\mathbb{R}^{3})\times\mathcal{D}^{1,2}(\mathbb{R}^{3})\rightarrow
\mathbb{R}$ given by%
\[
\mathcal{E}(u,\phi)=\frac{1}{2}\int\left(  \left\vert \nabla u\right\vert
^{2}+V\, u^{2}\right)\mathrm{d}x  -\frac{1}{4}\int\left\vert \nabla\phi\right\vert
^{2}\,\mathrm{d}x+\frac{1}{2}\int\phi \, u^{2}\,\mathrm{d}x-\int F(u)\,\mathrm{d}x
\]
where
\[
F(t)=\int_{0}^{t}f(s)\,\mathrm{d}s\text{,}
\]
is such that critical points $\left(
u,\phi\right)  $ of $\mathcal{E}$ are solutions of \eqref{e1}. However, since $\mathcal{E}$ is strongly indefinite and thus difficult to deal with, Benci {\em et al.}\,\cite{MR1659454,MR1714281} proposed the
following reduction procedure. For $u\in H^{1}(\mathbb{R}^{3})$ let $\phi
_{u}\in\mathcal{D}^{1,2}(\mathbb{R}^{3})$ be the unique solution of  $-\Delta\phi=u^{2}$ in \eqref{e1}. Then,  $u$ is a critical point of
\begin{equation}
J(u):=\frac{1}{2}\int\left(  \left\vert \nabla u\right\vert ^{2}+V\, u^{2}%
\right)\mathrm{d}x  +\frac{1}{4}\int\phi_{u}\, u^{2}\,\mathrm{d}x-\int F(u)\,\mathrm{d}x\text{,}\label{eJ}
\end{equation}
if and only if  $\left(  u,\phi_{u}\right)  $ solves \eqref{e1}; see \cite{MR1659454,MR1714281} or \cite[pp.
4929--4932]{MR2548724} for more details.

Based on this reduction method, many results on the system \eqref{e1} appeared
in the last twenty years which we will now briefly review, starting from those assuming that the associated Schr\"{o}din\-g\-er operator $-\Delta +V$ is positive definite. For  $V\equiv 1$ and $f(u)=|u|^{p-1}u$ with $p\in \left]1,5\right[$, Ruiz \cite{MR2230354} thoroughly investigated the existence of solutions for \eqref{e1}. The first paper on Schr\"{o}dinger-Poisson systems with non-constant potential seems to be Wang and Zhou \cite{MR2318269}, where $f$ is asymptotically linear. The asymptotically linear case was also studied by Sun \emph{et al.}\ \cite{MR2733219}, where as in an earlier paper \cite{MR2439518} by Mercuri, the potential is radial and vanishing at infinity. System \eqref{e1} with such potentials was also studied by Liu and Huang \cite{MR3128425} for sublinear nonlinearities.
Azzolini and Pomponio \cite{MR2422637} obtained a ground state  for $p\in\left]2,5\right[$ and positive constant potentials, while under assumption
\begin{equation}
\infty> \sup_{\R^{3}}V=\lim_{|y|\to\infty}V(y)
\label{fpww}
\end{equation}
they obtained a ground state for $p\in\left]3,5\right[$. Assuming \eqref{fpww} and some additional properties on $V$, Zhao and Zhao \cite{MR2428280} obtained ground states also for $p\in\left]2,3\right]$. Moreover, they considered periodic potentials as well, proving the existence of infinitely many
geometrically distinct solutions for $3$-superlinear nonlinearities $f$, i.e.
\begin{equation}
\label{4superl}
\lim_{|t|\to +\infty}\frac{f(x, t)t}{t^{4}}=+\infty\qquad \text{uniformly in $x\in \R^{3}$}.
\end{equation}
Results on \eqref{e1} with periodic potential can also be found in \cite{MR2769159,MR3427661}. For other types of potential we mention Chen and Tang \cite{MR2548724}, where $V$ is in some sense coercive, so that the working space can be compactly embedded into $L^2(\mathbb{R}^3)$. For $3$-superlinear, odd nonlinearities they obtained a sequence of solutions $\{u_n\}$ such that $J(u_n)\to+\infty$. Jiang and Zhou \cite{MR2802025} studied the steep potential well case $V(x)=1+\mu g(x)$, where $g(x)\ge0$ is such that $g^{-1}(0)$ is bounded and has nonempty
interior. Then, for pure power nonlinearities, nontrivial solutions are obtained for sufficiently large $\mu$. Asymptotic behavior of the solutions as $\mu\to +\infty$ is also investigated.

Cerami and Vaira \cite{MR2557904} obtained a ground state  for the generalized Schr\"{o}dinger-Poisson system
\begin{equation}
\left\{
\begin{array}{ll}
-\Delta u+V\, u+K\, \phi\, u=Q\, |u|^{p-1}u\text{,}\\
-\Delta \phi=K\, u^2\text{,}
\end{array}
\right.
\label{cvjde}
\end{equation}
where $V\equiv 1$, $p\in\left]3,5\right[$, $K$ and $Q$ are nonnegative functions on $\mathbb{R}^3$ satisfying
suitable assumptions. It is also possible to obtain solutions when no ground state exists, see {\em e.g.}\ \cite{MR3551056} for $p\in \ ]3, 5[$ and \cite{CM1} in the critical case $p=5$ with $Q\equiv 1$. Sun \emph{et al.} \cite{MR3411683}  found at least $k$ positive solutions for \eqref{cvjde}  for sufficiently large $\lambda$ assuming $Q$ has $k$ strict positive maxima.

Concerning sign-changing solutions of \eqref{e1}, the first result seems to be obtained by Ianni \cite{MR3114313} via a dynamical approach for the case $V\equiv 1$ and $f(u)=|u|^{p-1}u$ with $p\in\left[3, 5\right[$; see also Kim and Seok \cite{MR2989645} for a similar result. Notice that the relations
\[
J(u)=J(u^+)+J(u^-),\qquad\langle J'(u),u^\pm\rangle=\langle J'(u^\pm),u^\pm\rangle
\]
where $u^\pm=\max\{\pm u,0\}$ {\em fail} to be true  for $J$, thus posing additional difficulties when trying to construct sign-changing solutions if $V$ is not a constant. In this direction, the first breakthrough seems to be made by Wang and Zhou \cite{MR3311919} in the case $f(u)=|u|^{p-1}u$ with $p\in\left]3,5\right[$. Motivated by Bartsch and Weth \cite{MR2136244}, they sought minimizer for $J$ over a constraint $\mathcal{M}$ and showed that the minimizer is a sign-changing solution via degree theory. Using the method of invariant sets of descending flow, Liu \emph{et al.} \cite{MR3500305} obtained sign-changing solutions for $3$-superlinear nonlinearities. In these papers a compactness condition related to $V$ was assumed, which fails if $V$ obeys {\em e.g.}\ \eqref{fpww} (finite potential well). A least energy sign-changing solution assuming \eqref{fpww} and \eqref{4superl} was obtained by Alves \emph{et al.} \cite{MR3619755}.

We emphasize that in the aforementioned papers the Schr\"{o}dinger
operator $-\Delta+V$ is always assumed to be positive definite (as when $\inf
_{\mathbb{R}^{3}}V>0$). With this assumption $u\equiv 0$ is a
local minimizer of $J$, leading to a mountain pass geometry if $f$ is $3$-superlinear (actually, superquadratic will often suffices, but require more intricate arguments).
However, if we seek for standing waves with
large  $\omega$, then $V=V_{\rm ext}-\omega$ will be negative somewhere, disrupting the mountain pass
geometry.
For stationary NLS equations
\begin{equation}
-\Delta u+V\, u=f(u)\text{\qquad in }
\mathbb{R}^{N}\text{,} \label{xm2}%
\end{equation}
with indefinite Schr\"{o}dinger operator $-\Delta+V$, one usually applies the linking theorem to get solutions, see {\em e.g.}\ \cite{MR1751952,MR2957647}.
For system \eqref{e1}, however, because the term
involving $\phi_{u}$ in the functional $J$ is nonnegative, $J$ may be positive
somewhere on the negative space of  $-\Delta+V$. It thus seems hard to verify the linking geometry and get critical points of
$J$ via the linking theorem; see \cite[p. 47]{MR3303004} for further discussion on this issue. This is probably one of the
reasons why there are very few existence results for
\eqref{e1} if the Schr\"{o}dinger operator $-\Delta+V$ is indefinite.

In \cite[Theorem 1.1]{MR3045632}, Zhao \emph{et al.} studied the following system%
\begin{equation}
\left\{
\begin{array}
[c]{ll}%
-\Delta u+\lambda\, V\, u+K\, \phi\, u=\left\vert u\right\vert ^{p-1}u\text{,} \\
-\Delta\phi=K\, u^{2}\text{,}
\end{array}
\right.  \label{xm3}%
\end{equation}
Here $p\in\left]  3, 5\right[  $, $K$ is a nonnegative function while $V$ may be negative somewhere. When $\lambda>0$ is sufficiently large and $|
K| _{2}$ (or $| K|_{\infty}$) is sufficiently small, they got a
nontrivial solution for \eqref{xm3} through the linking theorem (see also Ye and Tang
\cite[Theorem 1.1]{MR3336325} for a generalization involving $3$-superlinear reactions).
Note that if $K\equiv0$, the system \eqref{xm3} reduces to the single equation \eqref{xm2}, whose
functional exhibits a linking structure near $0$. It is thus natural to expect that the
functional for \eqref{xm3} also possesses the linking structure if $K$ is small
in suitable sense, as required in \cite{MR3336325,MR3045632}.

The first result without any smallness assumption on the factor of $\phi\, u$ is due to Chen and Liu
\cite{MR3303004}, where a nontrivial solution for \eqref{e1} was obtained when $V$ is coercive in the sense
\begin{itemize}
\item[$\left(  V_{0}\right)  $]$V\in C(\mathbb{R}^{3})$ is bounded from below
and $\left\vert \left\{  V\leq k\right\}  \right\vert <\infty$ for all
$k\in\mathbb{R}$,
\end{itemize}
and $f$ is assumed to be $3$-superlinear and subcritical.
The key observation of \cite{MR3303004} is that although $J$
may not posses the linking geometry, it nevertheless has a local linking at $0$ as soon as $f$ is superlinear near $0$. Critical points for $J$ can thus be obtained via the local linking
theory \cite{MR1312028,MR2135818}.

Notice that condition $(V_{0})$ ensures that for some $m>2$, $\tilde V:=V+m>1$ and that the embedding
\[
X:=\left\{u\in H^{1}(\R^{3}): \int \tilde V\, u^{2}\, \mathrm{d}x<+\infty\right\}\hookrightarrow L^{2}(\R^{3})
\]
is compact, where the norm in $X$ is given by
\[
\|u\|=\left(\int\left[|\nabla u|^2+\tilde{V}u^2\right]\, \mathrm{d}x\right)^{1/2},\qquad u\in X.
\]
In general, the natural working space fails to compactly embed into $L^{2}(\R^{3})$ and in
\cite{MR3656292}, Liu and Wu studied the case $V\in C(\mathbb{R}^{3})$ being
bounded. Assuming $f(x,t)=a(x)\left\vert t\right\vert ^{p-1}t$
where $a> 0$ vanishing at infinity and $p\in\left]
3,5\right[  $, they obtained a nontrivial solution for \eqref{xm3} via local linking and
Morse theory \cite{MR1196690,MR982267}.

We underline that in the previous papers
\cite{MR3303004,MR3656292,MR3336325,MR3045632}, the nonlinearity is
$3$-superlinear, i.e. \eqref{4superl} holds and, to the best of our knowledge, currently there is no result for the
Schr\"{o}dinger-Poisson system \eqref{e1} with indefinite potential and
nonlinearity not $3$-superlinear. The relevance of this latter framework is clear from the introductory discussion on Bose-Einstein condensates,  since the nonlinearity $f(t)=|t|^{2}t$ corresponding to the Gross-Pitaevskii equation is exactly $3$-linear.

Our first result treats subquadratic nonlinearities. By $\sigma(-\Delta+V)$ we mean the spectrum of $-\Delta+V$, which is understood as the natural self-adjoint operator on $X$ corresponding to the bilinear form given by \eqref{defQ} below.

\begin{theorem}
\label{thc}Suppose that $\left(  V_{0}\right)  $ hold and $\inf\sigma
(-\Delta+V)\leq0$. If there
 exist $C,v>0$, $p,q\in
 \left]  1,2\right[  $ such
that%
\begin{equation}\label{xmxm}
\left\vert f(t)\right\vert \leq C(  \left\vert t\right\vert
^{p}+\left\vert t\right\vert ^{q})
\end{equation}
and $F(t)\geq c\left\vert t\right\vert ^{p+1}$ for all $t\in\mathbb{R}$, then
there are at least two nontrivial solutions to \eqref{e1}.
\end{theorem}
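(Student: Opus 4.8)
The plan is to apply Morse theory to the reduced functional $J$ of \eqref{eJ} on $X$, combining a local linking at the origin with the global coercivity of $J$. I would first record the consequences of $(V_0)$: the operator $-\Delta+V$ has discrete spectrum and $X$ embeds compactly into $L^s(\R^3)$ for every $s\in[2,6[$, so that $X=Y\oplus X^+$ with $Y:=X^-\oplus X^0$ finite-dimensional of dimension $d_0:=\dim Y\ge1$ (here $d_0\ge1$ uses $\inf\sigma(-\Delta+V)\le0$), the quadratic form $Q(u):=\int(|\nabla u|^2+Vu^2)\,\mathrm dx$ being $\le0$ on $Y$ and satisfying $Q(u)\ge\mu\|u\|^2$ on $X^+$ for some $\mu>0$.

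Next I would verify that $J$ has a local linking at $0$ with respect to $X=Y\oplus X^+$. On $X^+$ the Coulomb term $\frac14\int\phi_u u^2$ is nonnegative and, since $\int F(u)\le C(\|u\|^{p+1}+\|u\|^{q+1})$ with $p+1,q+1>2$, one gets $J(u)\ge\frac\mu2\|u\|^2-C(\|u\|^{p+1}+\|u\|^{q+1})>0$ for small $u\neq0$. On the finite-dimensional $Y$, where all norms are equivalent, $Q\le0$ and the Coulomb term is a negligible quartic, while the lower bound $F(t)\ge c|t|^{p+1}$ yields $\int F(u)\ge c'\|u\|^{p+1}$; as $p+1<4$ this reaction dominates and $J(u)\le-\tfrac{c'}{2}\|u\|^{p+1}\le0$ for $\|u\|$ small. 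It is exactly this lower bound that controls the kernel directions $X^0$. Standard local linking theory then gives the nontrivial critical group $C_{d_0}(J,0)\neq0$.

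The heart of the proof, and the step I expect to be the main obstacle, is to show that $J$ is coercive; with the compact embeddings this yields at once that $J$ is bounded below and satisfies the (PS) condition. Arguing by contradiction, suppose $\|u_n\|\to\infty$ while $J(u_n)$ stays bounded above, and set $v_n:=u_n/\|u_n\|\weakto v$. If $v\neq0$, homogeneity of the Coulomb term gives $\frac14\int\phi_{u_n}u_n^2=\frac14\|u_n\|^4\int\phi_{v_n}v_n^2\to+\infty$, which dominates $Q(u_n)=O(\|u_n\|^2)$ and $\int F(u_n)=O(\|u_n\|^{p+1})$ since $p+1<4$, forcing $J(u_n)\to+\infty$. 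The delicate case is $v=0$, i.e. $|v_n|_2\to0$, where $Q(u_n)=\|u_n\|^2(1+o(1))$. Here I would combine the Hölder interpolation $\int|u|^{p+1}\le|u|_2^{4-2p}\,|u|_3^{3(p-1)}$ with the structural inequality $|u|_3^3\le\|\nabla u\|_2\big(\int\phi_u u^2\big)^{1/2}$, which reinjects the $L^3$-norm into the Coulomb term via $\frac14\int\phi_{u_n}u_n^2\ge|u_n|_3^6/(4\|u_n\|^2)$. The monomial $\|u_n\|^{4-2p}|u_n|_3^{3(p-1)}$ is precisely of the borderline type dominated by $\|u_n\|^2$ and $|u_n|_3^6/\|u_n\|^2$ with Young exponents summing to exactly $1$, so the vanishing prefactor $|v_n|_2^{4-2p}\to0$ renders the reaction negligible against $\frac12\|u_n\|^2+\frac14\int\phi_{u_n}u_n^2$; the $q$-term is handled identically. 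This is where $p,q<2$ is decisive, as it forces $4-2p,4-2q>0$. Thus $J(u_n)\to+\infty$ in both cases, a contradiction, so $J$ is coercive and admits a global minimizer $u_1$, which is nontrivial because $J(u_1)=\min_X J<0=J(0)$.

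Finally I would close by Morse theory. Coercivity gives $C_*(J,\infty)=\delta_{*0}\,\mathbb Z$, the minimizer has $C_*(J,u_1)=\delta_{*0}\,\mathbb Z$, while $C_{d_0}(J,0)\neq0$ with $d_0\ge1$. Were $0$ and $u_1$ the only critical points, the Morse relation $\sum_u P_t(J,u)=1+(1+t)\,Q_t$ with $Q_t$ having nonnegative coefficients could not absorb the contribution of $C_{d_0}(J,0)$ in the positive degree $d_0$, since $u_1$ and the topology at infinity live in degree $0$. This forces a further critical point $u_2\notin\{0,u_1\}$, and hence \eqref{e1} possesses at least two nontrivial solutions.
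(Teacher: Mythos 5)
Your overall architecture matches the paper's: prove $J$ is coercive (hence bounded below and, via the compact embeddings, satisfying $(PS)$), establish a local linking at $0$ with respect to $(X_-\oplus X_0)\oplus X_+$, and conclude by a three-critical-points principle. Your coercivity argument is genuinely different from the paper's and, as far as I can check, correct: the paper proves coercivity directly, splitting every integral over $\{V\le M\}$ and $\{V>M\}$ and exploiting $\vert\{V\le M\}\vert<\infty$, whereas you argue by contradiction on the normalized sequence $v_n=u_n/\|u_n\|$ and use the dichotomy $v\neq0$ versus $v=0$; in the delicate case $v=0$, your combination of the interpolation $\int\vert u\vert^{p+1}\,\mathrm{d}x\le \vert u\vert_2^{4-2p}\vert u\vert_3^{3(p-1)}$ with the Cauchy--Schwarz form $\vert u\vert_3^3\le\|\nabla u\|_2\bigl(\int\phi_u u^2\,\mathrm{d}x\bigr)^{1/2}$ of the paper's inequality \eqref{RI}, followed by Young's inequality with exponents $(3-p)/2+(p-1)/2=1$, does exactly the required job, and it is precisely here that $p,q<2$ enters, as in the paper. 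The local linking step is essentially identical to the paper's.

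The genuine gap is in your final Morse-theoretic step. With $K=\{0,u_1\}$ the Morse relation reads $P_t(J,0)+1=1+(1+t)Q_t$, i.e.\ $P_t(J,0)=(1+t)Q_t$, and—contrary to your claim—this is \emph{not} self-contradictory if all you know is $C_{d_0}(J,0)\neq0$ with $d_0\ge1$: nothing you have proved excludes, say, $C_{d_0-1}(J,0)\neq0$ as well, and $P_t(J,0)=t^{d_0-1}+t^{d_0}=(1+t)\,t^{d_0-1}$ is perfectly compatible with a $Q_t$ having nonnegative coefficients. The forbidden ``absorption'' can thus occur through a cancellation pair among the unknown critical groups of $0$ itself in adjacent degrees; comparing degree $d_0$ against the degree-$0$ contributions of $u_1$ and of infinity is not enough. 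To close the argument you must either (i) prove the stronger statement that, were $0$ and $u_1$ the only critical points, then \emph{all} critical groups of $0$ vanish: taking $J(u_1)<a<0<b$, one has $H_q(J^a)\cong C_q(J,u_1)\cong\delta_{q0}\mathbb{Z}$, $H_q(J^b)\cong H_q(X)=\delta_{q0}\mathbb{Z}$ and $H_q(J^b,J^a)=C_q(J,0)$, so the long exact sequence of the pair $(J^b,J^a)$ forces $C_q(J,0)=0$ for every $q$, contradicting $C_{d_0}(J,0)\neq0$; or (ii) simply invoke the three critical points theorem recorded in the paper as Theorem \ref{Perera} (Liu \cite{MR1110119}), which is exactly the paper's route. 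Note also that the whole critical-group discussion presupposes that $J$ has only finitely many critical points (otherwise the theorem is trivially true) and in particular that $0$ and $u_1$ are isolated; this hypothesis should be made explicit before speaking of $C_{d_0}(J,0)$ and of the Morse relation.
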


Under the stated assumptions, the functional
$J$ is coercive, hence $\left(  PS\right)$ sequences
are automatically bounded and precompact by $(V_{0})$. Since $J$ has a local linking at $0$, Theorem \ref{thc} follows from  the three critical points theorem of Liu \cite[Theorem 2.2]{MR1110119}.

Our next and main result deals with the superquadratic case, which includes $f(u)=\left\vert u\right\vert
^{p-1}u$ with $p\in\left]
2,5\right[  $. In addition to the basic assumptions $\left(  V_{0}\right)  $, we will need the following
\begin{itemize}
\item[$\left(  V_{1}\right)  $] $V\in C^{1}(\mathbb{R}^{3})$ and there exists
$R>0$ such that
\[
W(x):=2V(x)+\nabla V(x)\cdot x\geq0\text{\qquad for }\left\vert x\right\vert
\geq R\text{.}%
\]

\item[$\left(  V_{2}\right)  $] There exists $\kappa>0$ such that%
\[
\left\vert \nabla V(x)\cdot x\right\vert \leq\kappa\left(  V(x)+m\right)
\text{\qquad for all }x\in\mathbb{R}^{3}\text{.}%
\]

\item[$\left(  f_{1}\right)  $] $f\in C(\mathbb{R})$ and there exists $C>0$,
$p\in\left(  1,5\right)  $ such that%
\[
\left\vert f(t)\right\vert \leq C(  \left\vert t\right\vert +\left\vert
t\right\vert ^{p})  \text{\qquad for all }t\in\mathbb{R}\text{,}%
\]

\item[$\left(  f_{2}\right)  $] There exists $\mu>3$ such that%
\[
f(t)t\geq\mu F(t)>0\text{\qquad for all }t\in\mathbb{R}\backslash\left\{
0\right\}  \,\text{.}%
\]
\end{itemize}
It is easy to check that any coercive, radially increasing potential
with at most polynomial growth satisfies our assumptions, an explicit
example being $V(x)=|x|^2-1$.
For a more detailed discussion on assumptions $(V_{1})$ and $(V_{2})$, we refer to the beginning of Section 3.
Moreover, we let $X_+$, $X_-$ and $X_0$ denote  the positive, negative and null eigenspaces of
the Schr\"{o}dinger operator, respectively.

\begin{theorem}
\label{t5.12}Assume $\left(  V_{0}\right)  $--$\left(  V_{2}\right)  $,
$\left(  f_{1}\right)  $--$\left(  f_{2}\right)  $ hold. If either

\begin{enumerate}
\item $\dim X_{-}>0$, $\dim X_{0}=0$

\item $\dim X_{0}>0$ and $F(t)\geq c\left\vert t\right\vert ^{\nu}$ for some
$\nu<4$,
\end{enumerate}
then problem \eqref{e1} has at least a nontrivial solution.
\end{theorem}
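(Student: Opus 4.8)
The plan is to apply the variational machinery to the reduced functional $J$ of \eqref{eJ} on the Hilbert space $X$, whose norm is equivalent to the one induced by the quadratic form of $-\Delta+V$, and to use the spectral decomposition $X=X_-\oplus X_0\oplus X_+$. Because $(V_0)$ renders the embedding $X\hookrightarrow L^2(\R^3)$ compact, the operator $-\Delta+V$ has discrete spectrum tending to $+\infty$, so $X_-\oplus X_0$ is finite dimensional; write $k=\dim(X_-\oplus X_0)$, which is positive under either case hypothesis. As in the indefinite $3$-superlinear case, the goal is to produce a nontrivial critical point of $J$ out of a local linking at the origin, the novelty being that $f$ need not be $3$-superlinear.

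First I would verify the local linking at $0$ relative to $X=X_+\oplus(X_-\oplus X_0)$. Setting $a(u)=\int(|\nabla u|^2+Vu^2)$, condition $(f_2)$ forces $F(t)\le C|t|^\mu$ for $|t|$ small with $\mu>3$, so $F(t)=o(t^2)$ near $0$; together with $(f_1)$ this gives $\int F(u)\le\eps\|u\|^2+C_\eps\|u\|^{p+1}$. On $X_+$, where $a(u)\ge\delta\|u\|^2$ and the Coulomb term is nonnegative, one then gets $J(u)>0$ for $0<\|u\|\le r$. On $X_-$ the form $a$ is negative definite, $-\int F\le0$ and the Coulomb term is $O(\|u\|^4)$, so $J\le0$ near $0$; on $X_0$ the quadratic part vanishes and the sign of $J$ is dictated by the balance between the quartic Coulomb term and $-\int F$, which is exactly why the hypothesis $F(t)\ge c|t|^\nu$ with $\nu<4$ of case (2) is imposed. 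In case (1) one has $X_0=\{0\}$ and this point is vacuous. This local linking yields the nontrivial critical group $C_k(J,0)\neq0$.

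The heart of the matter, and the step I expect to be the main obstacle, is compactness, specifically the boundedness of Cerami sequences. The naive Ambrosetti--Rabinowitz estimate breaks down here: pairing $J(u_n)\to c$ with $\langle J'(u_n),u_n\rangle\to0$ leaves the quartic term $\int\phi_{u_n}u_n^2$ with an unfavourable coefficient, since $\mu>3$ lies below the value $4$ that the Coulomb term would naturally demand. This is precisely where $(V_1)$ and $(V_2)$ intervene, for they are scaling conditions. I would derive a Pohozaev-type identity for the system by testing with a (truncated) dilation field $x\cdot\nabla u$: here $(V_2)$ keeps this direction controlled by $\|u\|$, while $(V_1)$ fixes the sign of the weight $W=2V+\nabla V\cdot x$ outside a ball. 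Combining the energy identity, the Nehari identity $\langle J'(u_n),u_n\rangle\to0$ and this Pohozaev identity cancels the Coulomb term and leaves a coercive combination, giving $\sup_n\|u_n\|<\infty$. Precompactness then follows from the compactness in $(V_0)$, using that $a$ is coercive modulo the finite dimensional space $X_-\oplus X_0$.

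With compactness in hand, I would conclude within the local linking / Morse-theoretic framework of \cite{MR3303004,MR1312028,MR2135818}. If $0$ were the only critical point of the Cerami functional $J$, then $C_q(J,0)\cong C_q(J,\infty)$ for all $q$, so it would suffice to show $C_k(J,\infty)=0$. This I would read off from the scaling $u_\lambda=\lambda^\alpha u(\lambda\,\cdot)$ with a suitable exponent $\alpha$, whose admissible range is nonempty precisely because $\mu>3$ (for instance, when $3<\mu<4$ any $\alpha\in\bigl(\tfrac{2}{\mu-2},\tfrac{2}{4-\mu}\bigr)$ will do): then $\|u_\lambda\|\to\infty$ while $J(u_\lambda)\to-\infty$, so $J$ is unbounded below along this direction. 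Matching $C_k(J,0)\neq0$ against the vanishing of $C_k(J,\infty)$ then yields a contradiction, forcing a critical point $u\neq0$, which is the desired nontrivial solution of \eqref{e1}. Beyond the boundedness of Cerami sequences, the subtle point is that $J\to+\infty$ along every ray when $\mu<4$ and decreases to $-\infty$ only along the non-radial scaling above, so the critical groups at infinity must be extracted from that scaling rather than from radial anticoercivity.
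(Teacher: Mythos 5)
Your overall architecture --- local linking at the origin with respect to $X=(X_-\oplus X_0)\oplus X_+$, trivial critical groups at infinity read off from a dilation-type scaling, and a Morse-theoretic comparison to force a nontrivial critical point --- is exactly the strategy of the paper. The paper even confirms your side computations: the local linking works as you describe (with case (2) needing $F(t)\ge c|t|^\nu$, $\nu<4$, on $X_0$ against the quartic Coulomb term), and the scaling $u_t(x)=t^2u(tx)$ (your $\alpha=2$) does drive the functional to $-\infty$ precisely because $\mu>3$. But there is a genuine gap at the step you yourself identify as the heart of the matter: boundedness of (PS)/Cerami sequences. You propose to ``derive a Pohozaev-type identity'' for the sequence $\{u_n\}$ by testing with the dilation field $x\cdot\nabla u_n$ and to combine it with the energy and Nehari identities. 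This cannot work as stated: a Pohozaev identity is available for \emph{exact} critical points (it is proved via elliptic regularity, $u\in W^{2,2}_{\rm loc}$, and integration by parts, as in the paper's Proposition \ref{poz}), not for approximate ones. For a (PS) sequence the identity holds only up to the error $\langle J'(u_n),\,x\cdot\nabla u_n\rangle$, and $J'(u_n)\to 0$ in $X^*$ gives no control on this pairing because $\|x\cdot\nabla u_n\|$ is not bounded by $\|u_n\|$ --- indeed, with a coercive potential, $x\cdot\nabla u_n$ need not even belong to $X$ --- and in any case bounding it by $\|u_n\|$ would be circular, since $\sup_n\|u_n\|<\infty$ is precisely what you are trying to prove. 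This obstruction is well known: it is why Jeanjean's monotonicity trick exists (produce exact solutions of perturbed problems, to which Pohozaev \emph{does} apply), and the paper explains why that trick is unavailable here, since the geometry is only a local linking with no minimax description of critical values.

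The paper's way out, which your proposal is missing, is to build the Pohozaev information \emph{into} the functional: define $\tilde J(s,u)=\tfrac{s^2}{2}+J(u_{e^s})$ on $\R\times X$, with $u_t(x)=t^2u(tx)$. Then a (PS) sequence for $\tilde J$ automatically satisfies $\partial_s\tilde J(s_n,u_n)\to 0$, and $\partial_s\tilde J$ is exactly the Pohozaev-type quantity you wanted; the combination $(2\lambda-3)\tilde J-\partial_s\tilde J$ with $\lambda\in\,]3,\mu[$, together with $(V_0)$--$(V_2)$ and the estimate $\int|u|^3\le\tfrac12\int[|\nabla u|^2+\phi_u u^2]$, yields boundedness and then compactness (Theorem \ref{PS}). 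The price is that one must check that critical points of $\tilde J$ give critical points of $J$ (Lemma \ref{crt} shows $\bar s=0$ necessarily, via the Pohozaev identity for exact solutions), and one must redo the local linking and the homology at infinity for $\tilde J$ on $\R\times X$; for the latter, note also that your computation of $C_*(J,\infty)$ from anticoercivity along the scaling is itself incomplete --- one needs a uniqueness-of-crossing-time statement (the differential inequality of Lemma \ref{Ml}) to turn the scaling flow into a continuous deformation retract onto a sublevel set. So your proposal reproduces the paper's skeleton, but without the augmented-functional device the compactness step, and hence the whole argument, does not go through.
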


Let us discuss some features of Theorem \ref{t5.12} in the model case $f(u)=\left\vert
u\right\vert ^{p-1}u$. The main difficulty in studying the Schr\"{o}dinger-Poisson system in the range $p\in \ ]2, 3]$ is that
it is not known (even in the easiest setting $V\equiv 1$) wether $\left(  PS\right)$ sequences are bounded or not. This issue disappears in the $3$-superlinear setting $p>3$, allowing an easier application of variational methods (this is also why most of the previous papers on the subject assume \eqref{4superl}). To overcome this
difficulty there are typically two approaches:
\begin{itemize}
\item
seek for a minimum on a suitable manifold $\mathcal{M}$. Coercivity is still an issue for $p\in \ ]2, 3]$ and  the standard Nehari manifold won't help, so one usually works on the Pohozaev manifold (or variants of it);
\item
employ Struwe's monotonicity trick, i.e., define monotonic perturbations of $J$ and find a solution for almost every perturbation. Boundedness of the resulting sequence is proved via the Pohozaev identity.
\end{itemize}
Both approaches can be successful when the Schr\"{o}dinger operator $-\Delta+V$ is positive definite, but run into serious issues when it is not, for reasons which we will briefly outline.

For NLS \eqref{xm2} with indefinite potential, the Nehari manifold $\mathcal{N}$ can be modified as in Szulkin and Weth \cite{MR2557725} to produce ground states. This is possible thanks to a linearity feature of the Nehari manifold, namely that $\mathcal{N}$ is the set of critical points of $J$ along {\em lines} through $0$. On the contrary, the natural curves defining  the Pohozaev manifold (which is the one apparently needed to get coercivity)  are highly nonlinear and may have nothing to do with the orthogonal decomposition of the space dictated by the linear operator $-\Delta+V$.

On the other hand, Struwe's monotonicity trick is usually successful when a uniform mountain pass geometry or more general linking geometry holds for the family of monotonic perturbations of $J$, see \cite{MR1718530,MR1867339} respectively. This cannot hold for the indefinite Schr\"{o}dinger-Poisson systems we are considering because, as pointed out in \cite[p. 47]{MR3303004},  our functional $J$ only has a {\em local} linking at the origin. Currently, it seems unclear how to implement the monotonicity trick in a local linking geometry, due to the lack of an explicit minimax description of the critical values in this setting.

To get around these difficulties, motivated by \cite{MR1430506}, we consider an
augmented functional $\tilde{J}:\mathbb{R}\times X\rightarrow\mathbb{R}$, see
\eqref{argeq}. It turns out that $\tilde{J}$ easily satisfies the $\left(  PS\right)  $
condition (Theorem \ref{PS}), and if $\left(  \bar{s},\bar{u}\right)  $ is
critical point of $\tilde{J}$, then $\bar{u}$ is critical point of $J$ (Lemma
\ref{crt}). Moreover, $\tilde{J}$ has a local linking at $(0, 0)$ and we can compute the homology of $\tilde{J}$ at infinity, so that eventually we
will apply Morse theory to get a critical point of $\tilde{J}$ and thus of $J$.

The paper is organized as follows. In Section 2 we recall the functional analytic tools we'll need and prove Theorem
\ref{thc}. In Section 3, we deal with the superquadratic case and present the
proof of Theorem \ref{t5.12}.

\section{The coercive case}

Let us discuss some first consequences of assumption $(V_{0})$. From the lower boundedness we will henceforth fix $m>2$ such that
\begin{equation}
\label{em}
\tilde V(x):=V(x)+m>\frac{m}{2}>1,\qquad \text{for all $x\in \R^{3}$}.
\end{equation}
As already mentioned, by \cite{MR1349229} we see that the Hilbert space
\[
X:=\left\{u\in H^{1}(\R^{3}): \int \tilde V\, u^{2}\, \mathrm{d}x<+\infty\right\},\qquad (u, v)_{X}=\int\left[\nabla u\cdot \nabla v+\tilde V\, u\, v\right]\, \mathrm{d}x
\]
 compactly embeds in $L^{2}(\R^{3})$.  Notice that since $X$ also embeds into $L^{6}(\R^{3})$, by interpolation, $X\hookrightarrow L^{r}(\R^{3})$ compactly for all $r\in [2, 6[$. From the compactness of $X\hookrightarrow L^{2}(\R^{3})$, we deduce that the bilinear form
\begin{equation}
\label{defQ}
Q(u, v)=\frac12\int \left(\nabla u\cdot \nabla v+V\, u\, v\right)\, \mathrm{d}x, \qquad u, v\in X,
\end{equation}
is essentially selfadjoint (by Kato's criterion), semibounded from below on $X\subseteq L^{2}(\R^{3})$ and the spectrum of the corresponding Schr\"{o}dinger operator $\sigma(-\Delta+V)$ is discrete (with finite multiplicity) and bounded from below.
In the following, we will denote by $X_{+}$, $X_{-}$ and $X_{0}$ respectively the positive, negative and null eigenspaces of the Schr\"{o}dinger operator, and by $u\mapsto u_{\pm}$ and $u\mapsto u_{0}$ the corresponding orthogonal projections. Accordingly, there exists $\lambda_{\pm}>0$ such that
\begin{equation}
\label{pm}
\pm Q(u, u)\ge \lambda_{\pm }\|u_{\pm}\|^{2}\qquad \text{for $u\in X_{\pm}\oplus X_{0}$, respectively}.
\end{equation}

As already pointed out, solving \eqref{e1} is equivalent to finding critical points of the $C^{1}$ functional $J:X\rightarrow\mathbb{R}$,
\[
J(u)=\frac{1}{2}\int \left[\left\vert \nabla u\right\vert ^{2}
+V\,u^{2}\right]\, \mathrm{d}x  +\frac{1}{4}\int\phi_{u}\, u^{2}\, \mathrm{d}x-\int F( u)\, \mathrm{d}x,
\]
where $\phi_{u}$ is the unique solution of $-\Delta\phi=u^{2}$ in
$\mathcal{D}^{1,2}(\mathbb{R}^{3})$. Recall  that
\begin{equation}
\label{ct}
0\le \int\phi_{u}\, u^{2}\, \mathrm{d}x\le C\|u\|^{4},
\end{equation}
see {\em e.g.} \cite{MR2548724}. We will also need the following estimate, whose proof is similar to \cite[Eqn (19)]{MR2230354}, therefore is omitted.

\begin{lemma}
For any $u\in H^1(\mathbb{R}^3)$ we have
\begin{equation}
\label{RI}
\int\left\vert u\right\vert ^{3}\, \mathrm{d}x\le\frac{1}{2}\int\left[\left\vert \nabla
u\right\vert ^{2}+\phi_{u}\, u^{2}\right]\, \mathrm{d}x\text{.}%
\end{equation}
\end{lemma}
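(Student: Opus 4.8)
The plan is to reduce everything to a single integration-by-parts identity for the Poisson term. First I would observe that all three integrands in \eqref{RI} are unchanged upon replacing $u$ by $|u|$: indeed $\phi_u=\phi_{|u|}$ since $\phi_u$ is determined by $u^2=|u|^2$, while $|\nabla|u||=|\nabla u|$ almost everywhere. Setting $v:=|u|\ge0$, we have $v\in H^1(\mathbb{R}^3)\subseteq\mathcal{D}^{1,2}(\mathbb{R}^3)$, and it suffices to prove \eqref{RI} with the nonnegative function $v$ in place of $u$.

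The crucial step is the identity
\[
\int|u|^3\,\mathrm{d}x=\int v\cdot v^2\,\mathrm{d}x=\int\nabla v\cdot\nabla\phi_v\,\mathrm{d}x,
\]
obtained by testing the weak formulation of $-\Delta\phi_v=v^2$ with the admissible function $v$ itself. With this in hand, Young's inequality gives
\[
\int|u|^3\,\mathrm{d}x\le\frac12\int|\nabla v|^2\,\mathrm{d}x+\frac12\int|\nabla\phi_v|^2\,\mathrm{d}x,
\]
and testing the same equation with $\phi_v$ yields $\int|\nabla\phi_v|^2\,\mathrm{d}x=\int\phi_v\,v^2\,\mathrm{d}x$. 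Recalling that $|\nabla v|=|\nabla u|$ and $\phi_v\,v^2=\phi_u\,u^2$, this is precisely \eqref{RI}.

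The only point requiring care — and the one I expect to be the main (minor) obstacle — is the justification of these integrations by parts: one must check that $v=|u|$ is a legitimate test function in $\mathcal{D}^{1,2}(\mathbb{R}^3)$ and that all the integrals are finite. This is immediate from the embeddings recorded above: $u\in H^1(\mathbb{R}^3)$ gives $v\in L^2\cap L^6$ with $\nabla v\in L^2$, hence $v\in\mathcal{D}^{1,2}(\mathbb{R}^3)$, while $\int|u|^3<\infty$ follows from $H^1(\mathbb{R}^3)\hookrightarrow L^3(\mathbb{R}^3)$ and finiteness of $\int\phi_u u^2$ from \eqref{ct}. Moreover $v\in L^{12/5}(\mathbb{R}^3)$, so $v^2\in L^{6/5}(\mathbb{R}^3)$ and the linear functional $\psi\mapsto\int v^2\psi\,\mathrm{d}x$ is bounded on $\mathcal{D}^{1,2}(\mathbb{R}^3)$, which is exactly what legitimizes the weak formulation $\int\nabla\phi_v\cdot\nabla\psi\,\mathrm{d}x=\int v^2\psi\,\mathrm{d}x$. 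Once admissibility is granted, applying it with $\psi=v$ and $\psi=\phi_v$ closes the argument with no further estimates.
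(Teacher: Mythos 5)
Your proof is correct and is essentially the argument the paper appeals to: the paper omits the proof, referring to the derivation of Eqn (19) in Ruiz \cite{MR2230354}, which is exactly your computation --- test the weak formulation of $-\Delta\phi_u=u^2$ with $|u|\in\mathcal{D}^{1,2}(\mathbb{R}^3)$, apply Young's inequality, and use $\int|\nabla\phi_u|^2\,\mathrm{d}x=\int\phi_u\,u^2\,\mathrm{d}x$. The points you flag (admissibility of $|u|$ as a test function, $|\nabla|u||=|\nabla u|$ a.e., $\phi_u=\phi_{|u|}$, and the integrability checks) are all handled correctly, so nothing is missing.
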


Given a Hilbert space $X$, we say that a functional $J\in C^{1}(X)$ has a local linking at $0$ if $X=X^{-}\oplus X^{+}$ for some closed proper subspaces $X^{\pm}$ and for some $\rho>0$ there holds
\[
\begin{cases}
J>0&\text{in $B_{\rho}\cap (X^{+}\setminus \{0\})$},\\
J\le0&\text{in $B_{R}\cap X^{-}$}.
\end{cases}
\]
This implies that $u=0$ is a trivial critical point of $J$. The following three critical point theorem can be found in Liu \cite[Theorem 2.2]{MR1110119}, which is a special case of \cite[Theorem 2.1]{MR1828101}.
\begin{theorem}\label{Perera}
Let $J\in C^{1}(X)$ satisfy the $(PS)$-condition, have a local linking at $0$ with $\dim X^-<\infty$, and be bounded from below. Then $J$ has at least two nontrivial critical points.
\end{theorem}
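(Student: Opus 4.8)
The plan is to argue via Morse theory, computing critical groups with coefficients in a fixed field $\mathbb{F}$ and writing $J^{a}=\{u\in X: J(u)\le a\}$ for the sublevel sets. Throughout, the $(PS)$ condition together with boundedness from below legitimises the deformation lemma, so that $J^{a}$ deformation retracts onto $J^{b}$ whenever $[b,a]$ contains no critical values; this is the only analytic tool I need beyond the standard critical-group formalism.

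First I would produce one nontrivial critical point as a global minimiser. Since $J$ is bounded from below and satisfies $(PS)$, a minimising sequence admits a convergent subsequence whose limit $u_{1}$ realises $\inf_{X}J$ and is therefore a critical point. The local linking forces $J(0)=0$: indeed $0\in X^{-}$ gives $J(0)\le 0$, while $J>0$ on $B_{\rho}\cap(X^{+}\setminus\{0\})$ together with continuity gives $J(0)\ge 0$. Because $X^{+}$ is a proper subspace, $k:=\dim X^{-}\ge 1$, and the standard computation of critical groups under a local linking yields $C_{k}(J,0)\ne 0$ (if $0$ fails to be isolated, then $J$ already has infinitely many critical points and there is nothing to prove). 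As $k\ge 1$, this prevents $0$ from being a local, hence global, minimiser, so $u_{1}\ne 0$ and $\inf_{X}J=J(u_{1})<0=J(0)$. This is the first nontrivial critical point.

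For the second I would argue by contradiction, assuming that $0$ and $u_{1}$ are the only critical points. Choose $\eps>0$ so small that $J(u_{1})<-\eps$ and $0$ is the only critical value in $[-\eps,\eps]$. Since $u_{1}$ is the unique critical point below level $0$ and is a minimiser, $J^{-\eps}$ deformation retracts onto a set with the homology of a point, giving $H_{q}(J^{-\eps})\cong\delta_{q0}\,\mathbb{F}$; since there are no critical values above $0$, the contractible space $X$ deformation retracts onto $J^{\eps}$, so $H_{q}(J^{\eps})\cong\delta_{q0}\,\mathbb{F}$. As $0$ is the unique critical point at its level, one has $C_{q}(J,0)\cong H_{q}(J^{\eps},J^{-\eps})$. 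Feeding these two computations into the long exact sequence of the pair $(J^{\eps},J^{-\eps})$ and using that the inclusion $J^{-\eps}\hookrightarrow J^{\eps}$ induces an isomorphism on $H_{0}$ (both sets are path-connected and nonempty), a short diagram chase gives $H_{q}(J^{\eps},J^{-\eps})=0$ for every $q$. This contradicts $C_{k}(J,0)\ne 0$, so a third critical point must exist; being distinct from $0$, it is a second nontrivial critical point.

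The delicate point is the critical-group input $C_{k}(J,0)\ne 0$ extracted from the local linking: this is exactly where the hypothesis $\dim X^{-}=k<\infty$ is used, and it is the content of the local linking theorems of Liu and of Liu--Su, which I would invoke rather than reprove. Everything else is routine once the deformation lemma is in place; the only care required is to dispatch the non-isolated case separately (where the conclusion is immediate) and to verify that $0$ and $u_{1}$ lie at distinct levels, which follows from $0$ failing to be a local minimiser.
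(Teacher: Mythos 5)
There is no internal proof to compare against here: the paper does not prove this statement, but quotes it as Liu's three critical point theorem, \cite[Theorem 2.2]{MR1110119}, itself a special case of \cite[Theorem 2.1]{MR1828101}. What you have written is a correct reconstruction of the standard Morse-theoretic argument lying behind that citation. You take the deepest ingredient as a black box --- that a local linking at an isolated critical point $0$ with $1\le k=\dim X^{-}<\infty$ forces $C_{k}(J,0)\neq 0$, which is exactly the content of the references you name --- and you derive the rest correctly: $J(0)=0$ from the linking geometry and the properness of $X^{\pm}$; exclusion of $0$ as a local (hence global) minimiser, so that $\inf_{X}J<0$ and the minimiser $u_{1}$ is nontrivial; and, assuming $0$ and $u_{1}$ were the only critical points, the second deformation lemma plus the long exact sequence of the pair $(J^{\varepsilon},J^{-\varepsilon})$ forcing $C_{q}(J,0)\cong H_{q}(J^{\varepsilon},J^{-\varepsilon})=0$ for every $q$, contradicting $C_{k}(J,0)\neq 0$. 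The dichotomy dispatching the non-isolated case is also handled properly. The one step that needs repair is the attainment of the infimum: boundedness from below and $(PS)$ do not imply that an \emph{arbitrary} minimising sequence subconverges, since nothing forces $DJ(u_{n})\to 0$ along it; you should first apply Ekeland's variational principle to produce a minimising sequence which is also a $(PS)$ sequence, and only then invoke $(PS)$ to obtain $u_{1}$. This is a standard, purely local fix and does not affect the architecture of your argument.
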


Now we can start our investigation for the functional $J$.

\begin{proposition}
Suppose that $(V_{0})$ holds and that there exist $C\ge 0$, $p,q\in [1, 2[$ such that
\begin{equation}
|F(t)|\le C(|t|^{p+1}+|t|^{q+1})\label{hyp2}
\end{equation}
for all $t\in \R$, then $J$ is coercive on $X$.
\end{proposition}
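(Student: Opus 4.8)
The plan is to show that $J(u)\to+\infty$ as $\|u\|\to\infty$ by bounding $J$ below by a positive multiple of $\|u\|^2$ minus lower-order terms coming from the nonlinearity. First I would split the quadratic form: writing $u=u_++u_-+u_0$ according to the spectral decomposition, the quantity $\frac12\int(|\nabla u|^2+Vu^2)\,\mathrm{d}x=Q(u,u)$ is not sign-definite, so the genuinely coercive part of the energy must come from elsewhere. The key structural observation is that on the finite-dimensional space $X_-\oplus X_0$ the operator $-\Delta+V$ is bounded below only trivially, but on $X_+$ we have the good estimate \eqref{pm}. The idea is that the Poisson term $\frac14\int\phi_u u^2\,\mathrm{d}x\ge0$ together with the growth control on $F$ supplies the missing coercivity on $X_-\oplus X_0$, which is finite-dimensional.

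The main tool will be the estimate \eqref{RI}, which gives $\int|u|^3\,\mathrm{d}x\le\frac12\int(|\nabla u|^2+\phi_u u^2)\,\mathrm{d}x$. Rewriting $J$ as
\[
J(u)=\frac12\int|\nabla u|^2\,\mathrm{d}x+\frac12\int Vu^2\,\mathrm{d}x+\frac14\int\phi_u u^2\,\mathrm{d}x-\int F(u)\,\mathrm{d}x,
\]
I would use \eqref{RI} to absorb a fraction of the gradient and Poisson terms into a positive multiple of $\int|u|^3\,\mathrm{d}x$, producing a term that dominates the potential energy $\frac12\int Vu^2\,\mathrm{d}x$ where $V$ is negative. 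Concretely, since $V$ is bounded below (by $(V_0)$) and the set where $V$ is very negative has finite measure, the negative part of $\int Vu^2\,\mathrm{d}x$ can be estimated on a bounded region, where it is controlled by $\int|u|^2$, and then interpolated against the $\int|u|^3$ term gained from \eqref{RI} together with the remaining gradient/Poisson energy. Meanwhile the hypothesis \eqref{hyp2} bounds $\int F(u)\,\mathrm{d}x$ by $C(|u|_{p+1}^{p+1}+|u|_{q+1}^{q+1})$ with $p+1,q+1\in[2,4[$, all subquartic and hence of strictly lower order than the quadratic norm $\|u\|^2$ once the quadratic part is shown coercive.

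The hard part will be extracting a coercive positive multiple of the full norm $\|u\|^2$ after the indefinite quadratic form and the negative potential region have been handled; the danger is that the coercive contribution collapses on $X_-\oplus X_0$. I expect to argue that along any sequence with $\|u_n\|\to\infty$, either the $X_+$ component dominates, in which case \eqref{pm} and the subquartic growth of $F$ close the estimate directly, or the finite-dimensional component $u_-^{(n)}+u_0^{(n)}$ carries a nonvanishing fraction of the norm, in which case the nonnegative quartic Poisson term $\frac14\int\phi_{u_n}u_n^2\,\mathrm{d}x$, controlled below via \eqref{RI} and \eqref{ct}, grows faster than every term in \eqref{hyp2} (all exponents $<4$), forcing $J(u_n)\to+\infty$. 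Balancing these two regimes, using that $\|u\|^2$, $\int|u|^3$ and the quartic Poisson term dominate the subquadratic-power bounds on $F$, is where the quantitative care is needed; once that balancing is set up, coercivity follows.
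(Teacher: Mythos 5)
Your proposal correctly identifies \eqref{RI} as the key tool and correctly senses that the negative part of $\int V u^2\,\mathrm{d}x$ should be handled on a finite-measure set and interpolated against $\int|u|^3\,\mathrm{d}x$. But the step on which the whole plan rests is false: you claim that the bounds in \eqref{hyp2}, having exponents $p+1,q+1$ ``all subquartic'', are ``of strictly lower order than the quadratic norm $\|u\|^2$''. Since $p+1,q+1\in[2,3[$ may exceed $2$, these terms are \emph{not} of lower order than $\|u\|^2$: for $u=t\varphi$ with $t\to\infty$ one has $\int|t\varphi|^{p+1}\,\mathrm{d}x = t^{p+1}\int|\varphi|^{p+1}\,\mathrm{d}x$, which dominates $t^2\|\varphi\|^2$ whenever $p+1>2$. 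Consequently, in your ``$X_+$ dominates'' regime the positive quadratic term supplied by \eqref{pm} cannot close the estimate at all; $\int F(u)\,\mathrm{d}x$ must be absorbed by the cubic term gained from \eqref{RI}, and this absorption is delicate: interpolating $|u|_{p+1}^{p+1}\le |u|_2^{(p+1)\theta}|u|_3^{(p+1)(1-\theta)}$ and using Young's inequality with the cubic coefficient kept below the fixed $\tfrac12$ from \eqref{RI} forces a \emph{large} constant in front of the $L^2$ term. The paper manufactures exactly this: it splits $\R^3$ (not $X$) into $\{V\le M\}$, of finite measure by $(V_0)$, where H\"older bounds $|u|_2^2$ and $\int|F(u)|\,\mathrm{d}x$ by sub-unit powers of $\int|u|^3\,\mathrm{d}x$, and $\{V>M\}$, where the potential itself supplies the coefficient $\tfrac{M-m}{4}$ on $\int u^2\,\mathrm{d}x$ with $M$ chosen so large that \eqref{e4} holds for every value of the ratio $R_u=|u|_3^3/|u|_2^2$. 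Your proposal has no substitute for this mechanism, and ``balancing the two regimes'' cannot be completed without it.

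The other branch of your dichotomy is also unsound. First, \eqref{ct} is an \emph{upper} bound $\int\phi_u u^2\,\mathrm{d}x\le C\|u\|^4$, so it cannot ``control below'' anything. Second, while on the finite-dimensional space $X_-\oplus X_0$ one does have $\int\phi_v v^2\,\mathrm{d}x\ge c\|v\|^4$ (by $4$-homogeneity, positivity and compactness of the unit sphere), this does not survive adding the $X_+$ component: the quartic form $u\mapsto\int\phi_u u^2\,\mathrm{d}x$ does not split along the orthogonal decomposition, so there is no lower bound in terms of $\|u_-+u_0\|^4$ alone when $u$ also has a large $X_+$ part. Finally, note that the spectral decomposition is a red herring here: in the paper's proof the coercive term is simply $\tfrac14\|u\|^2$, extracted from $\int\bigl[|\nabla u|^2+Vu^2\bigr]\,\mathrm{d}x=\|u\|^2-m|u|_2^2$, and all the work goes into proving that the remainder $\Lambda(u)=-\tfrac m4|u|_2^2+\tfrac12|u|_3^3+\tfrac14\int Vu^2\,\mathrm{d}x-\int|F(u)|\,\mathrm{d}x$ is bounded from below via the region splitting just described; the spaces $X_\pm$, $X_0$ never enter.
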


\begin{proof}
Let us choose $m>0$ as in \eqref{em} and set $\Lambda:X\rightarrow\mathbb{R}$,
\[
\Lambda(u)=-\frac{m}{4}\int u^{2}\, \mathrm{d}x+\frac{1}{2}\int\left\vert u\right\vert
^{3}\, \mathrm{d}x+\frac{1}{4}\int V\, u^{2}\, \mathrm{d}x-\int |F(u)|\, \mathrm{d}x\text{.}%
\]
For $u\in X$, using \eqref{RI} we have
\begin{align*}
J(u)  &  =\frac{1}{2}\int\left[ \left\vert \nabla u\right\vert
^{2}+V\, u^{2}\right]\, \mathrm{d}x  +\frac{1}{4}\int\phi_{u}\, u^{2}\, \mathrm{d}x-\int F( u)\, \mathrm{d}x\\
&  =\frac{1}{4}\int\left[\left\vert \nabla u\right\vert ^{2}+V\, u^{2}\right]\, \mathrm{d}x+\frac{1}{4}
\int\left[\left\vert \nabla u\right\vert ^{2}+\phi_{u}\, u^{2}\right]\, \mathrm{d}x  +\frac{1}
{4}\int V\, u^{2}\, \mathrm{d}x-\int F(u)\, \mathrm{d}x\\
&  \ge\frac{1}{4}\left\Vert u\right\Vert ^{2}-\frac{m}{4}\int u^{2}\, \mathrm{d}x+\frac
{1}{2}\int\left\vert u\right\vert ^{3}\, \mathrm{d}x+\frac{1}{4}\int V\, u^{2}\, \mathrm{d}x-\int |F(u)|\, \mathrm{d}x\\
&  =\frac{1}{4}\left\Vert u\right\Vert ^{2}+\Lambda(u)\text{.}%
\end{align*}
Therefore, it suffices to show that the functional $\Lambda$ is bounded from below.

For any  $M>m$, since $V(x)\ge-m$ for all $x\in\R$, we have
\[
\int V\, u^{2}\, \mathrm{d}x=\int_{\{V>M\}}V\, u^{2}\, \mathrm{d}x+\int_{\{V\le M\}}V\, u^{2}\, \mathrm{d}x\ge M\int
_{\{V>M\}}u^{2}\, \mathrm{d}x-m\int_{\{V\le M\}}u^{2}\, \mathrm{d}x\text{,}%
\]
so that
\begin{equation}
\Lambda(u)     \ge \frac{M}{4}\int_{\{V>M\}}u^{2}\, \mathrm{d}x-\frac{m}{4}\int_{\{V\le M\}}u^{2}\, \mathrm{d}x-\frac{m}{4}\int u^{2}\, \mathrm{d}x+\frac{1}
{2}\int\left\vert u\right\vert ^{3}\, \mathrm{d}x-\int |F(u)|\, \mathrm{d}x.\label{Lamd}
\end{equation}
Accordingly,  we split all the remaining integrals on the two sets $\{V>M\}$ and $\{V\le M\}$, proving boundedness of the corresponding quantities separately.

On $\{V\le M\}$, which has finite measure by assumption $(V_0)$, H\"older inequality gives
\[
\int_{\{V\le M\}}u^{2}\, \mathrm{d}x\le C_{2}\left(  \int_{\{V\le M\}}\left\vert
u\right\vert ^{3}\, \mathrm{d}x\right)  ^{\frac 2 3}.
\]
Similarly, using \eqref{hyp2} as well,
\[
\int_{\{V\le M\}} |F(u)|\, \mathrm{d}x\le C_{p}\left(  \int_{\{V\le M\}}\left\vert
u\right\vert ^{3}\, \mathrm{d}x\right)^{\frac{p+1}{3}}+C_{q}\left(  \int_{\{V\le M\}}\left\vert
u\right\vert ^{3}\, \mathrm{d}x\right)^{\frac{q+1}{3}}
\]
for some constants $C_{r}$ depending on $M$, $V$ and $r\in [1, 2[$.  Hence
\begin{align*}
\Lambda_{M}^{-}(u)&:=-\frac{m}{2}\int_{\{V\le M\}}u^{2}\, \mathrm{d}x + \frac{1}{2}\int_{\{V\le M\}}\left\vert u\right\vert ^{3}\, \mathrm{d}x-\int_{\{V\le M\}} |F(u)|\, \mathrm{d}x\\
&\ \ge \frac{1}{2}\int_{\{V\le M\}}\left\vert u\right\vert ^{3}\, \mathrm{d}x
-\frac{m\, C_{2}}{2}\left(  \int_{\{V\le M\}}\left\vert u\right\vert ^{3}\, \mathrm{d}x\right)  ^{\frac 2 3}\\
&\qquad\qquad-C_{p}\left(  \int_{\{V\le M\}}\left\vert u\right\vert ^{3}\, \mathrm{d}x\right)^{\frac{p+1}{3}}-C_{q}\left(  \int_{\{V\le M\}}\left\vert u\right\vert ^{3}\, \mathrm{d}x\right)^{\frac{q+1}{3}}
\end{align*}
and since  $q,p\in\ [ 1,2[$, for any choice of $M, m$ the right hand side is clearly bounded from below.

Consider now
\begin{align}
\Lambda_{M}^{+}(u)&:=\frac{M-m}{4}\int_{\{V>M\}}u^{2}\, \mathrm{d}x+ \frac{1}{2}\int_{\{V>M\}}\left\vert u\right\vert ^{3}\, \mathrm{d}x-\int_{\{V>M\}}|F(u)|\, \mathrm{d}x\nonumber\\
&\ge \frac{M-m}{4}\int_{\{V>M\}}u^{2}\, \mathrm{d}x+ \frac{1}{2}\int_{\{V>M\}}\left\vert u\right\vert ^{3}\, \mathrm{d}x\nonumber\\
&\qquad\qquad-C\int_{\{V>M\}}|u|^{p+1}\, \mathrm{d}x-C\int_{\{V>M\}}|u|^{q+1}\, \mathrm{d}x .\label{e2}
\end{align}
For $r\in\{ p+1, q+1\}$, by the interpolation inequality we have
\begin{equation}
\label{e3}
\int_{\{V>M\}}\left\vert u\right\vert ^{r}\, \mathrm{d}x\le\left(  \int_{\{V>M\}}u^{2}\, \mathrm{d}x\right)
^{\frac{r\theta_{r}}{2}}\left(  \int_{\{V>M\}}\left\vert u\right\vert ^{3}\, \mathrm{d}x\right)
^{\frac{r\left(  1-\theta_{r}\right)  }{3}}
\end{equation}
for $\theta_{r}\in\ [ 0,1[  $ satisfying
\[
\frac{r\, \theta_{r}}{2}+\frac{r\left(  1-\theta_{r}\right)  }{3}=1\text{.}
\]
We can suppose that $u\neq 0$ on $\left\{  V>M\right\}  $ (otherwise $\Lambda_{M}^{+}(u)=0$) and set
\[
R_{u}=\left(  \int_{\{V>M\}}u^{2}\, \mathrm{d}x\right)  ^{-1}\int_{\{V>M\}}\left\vert u\right\vert
^{3}\, \mathrm{d}x\text{.}
\]
Then applying \eqref{e3} for $r=q+1, p+1$ to \eqref{e2} we have
\begin{align*}
\Lambda_{M}^{+}(u) &  \ge\frac{M-m}{4}\int_{\{V>M\}}u^{2}\, \mathrm{d}x+\frac{1}{2}\int_{\{V>M\}}\left\vert
u\right\vert ^{3}\, \mathrm{d}x \\
&\qquad\qquad-C\left(  \int_{\{V>M\}}u^{2}\, \mathrm{d}x\right)  ^{\frac{(p+1)\theta_{p+1}
}{2}}\left(  \int_{\{V>M\}}\left\vert u\right\vert ^{3}\, \mathrm{d}x\right)  ^{\frac{(p+1)\left(
1-\theta_{p+1}\right)  }{3}}\\
&\qquad\qquad -C\left(  \int_{\{V>M\}}u^{2}\, \mathrm{d}x\right)  ^{\frac{(q+1)\theta_{q+1}
}{2}}\left(  \int_{\{V>M\}}\left\vert u\right\vert ^{3}\, \mathrm{d}x\right)  ^{\frac{(q+1)\left(
1-\theta_{q+1}\right)  }{3}}\\
&  =   \left(  \frac{M-m}{4}+\frac{R_{u}}{2}
-C\, R_{u}^{\frac{(p+1)\left(  1-\theta_{p+1}\right)  }{3}}-C\, R_{u}^{\frac{(q+1)\left(  1-\theta_{q+1}\right)  }{3}}\right)\int_{\{V>M\}}u^{2}\, \mathrm{d}x.
\end{align*}
Because  $\frac{r\left(  1-\theta_{r}\right)  }{3}<1$ for $r=p+1, q+1$, there exists $M>m$ such that
\begin{equation}
\label{e4}
\frac{M-m}{4}+\frac{R}{2}-C\, R^{\frac{(p+1)\left(  1-\theta_{p+1}\right)  }{3}}-C\, R^{\frac{(q+1)\left(  1-\theta_{q+1}\right)  }{3}}>0\text{,\qquad for all }R>0.
\end{equation}
With this choice of $M$ at the very beginning, the above argument shows that $\Lambda_{M}^{+}(u)\ge 0$. Since $ \Lambda_{M}^{-}(u)+\Lambda_{M}^{+}(u)$ is exactly the right hand side of \eqref{Lamd}, we deduce that $\Lambda$ is bounded from below and the proof is concluded.
\end{proof}

\begin{remark}
\quad
\begin{itemize}
\item
Regarding the potential, coercivity  holds under slightly weaker assumptions, namely that the measure of $\{V\le M\}$ is finite for a suitable large $M$ prescibed by the validity of \eqref{e4}. However, without assuming the full $(V_0)$ in the previous proposition, compactness starts becoming the main issue to prove existence of a solution.
\item
The case $1\le p<q=2$ can also be treated but is border-line: consider in \eqref{e1} the nonlinearity $f(t)=\lambda |t| t$ for $\lambda>0$. The previous proof still works for $\lambda\le \lambda_{0}$ being $\lambda_{0}$ a small positive number that can be explicitly computed, but fails for $\lambda>\lambda_{0}$. The arguments in \cite[Theorem 4.1]{MR2230354} show that for $\lambda>\lambda_{0}$ there are actually no solutions.
\end{itemize}
\end{remark}

Now we can give the proof of Theorem \ref{thc}.
\begin{proof}[Proof of Theorem \ref{thc}]
The arguments of \cite[p.4933]{MR2548724} and the coercivity of  $J$ imply that the $(PS)$ condition holds.  Let $X_{-}$, $X_{+}$ and $X_{0}$ be the negative, positive and zero eigenspaces of the bilinear form $Q$ defined in \eqref{defQ}, with $u_{-}$, $u_{+}$ and $u_{0}$ being the respective orthogonal projections of $u$.  We claim that $J$ has a local linking at $0$ with respect to the decomposition $(X_{-}\oplus X_{0})\oplus X_{+}$.
By the compactness of $X\hookrightarrow L^{2}(\R^{3})$, both $X_{-}$ and $X_{0}$ are finite dimensional and $(X_{-}\oplus X_{0})$ has positive dimension because $\inf \sigma(-\Delta +V)\le 0$. By the embedding $X\hookrightarrow L^{r}(\R^{3})$ for $r\in\{p+1, q+1\}$, there holds
\[
\left|\int F(u)\, \mathrm{d}x\right|\le C\left(\|u\|^{p+1}+\|u\|^{q+1}\right),
\]
so that
\[
J(u)=Q(u, u)+G(u)
\]
where, recalling that $p, q>1$ and \eqref{ct}
\[
G(u):=\int \left[\frac14\phi_{u}\, u^{2}-F(u)\right]\, \mathrm{d}x=o(\|u\|^{2})\text{,\qquad as }\|u\|\to0\text{.}
\]
This immediately forces $J> 0$ on $B_{R}\cap X_{+}\setminus \{0\}$ for suitably small $R>0$.
For $u$ in the finite dimensional space $X_{-}\oplus X_{0}$ (where all norms are equivalent), it holds
\[
\int F(u)\, \mathrm{d}x\ge c\int |u|^{p+1}\, \mathrm{d}x\ge \tilde c\|u\|^{p+1},
\]
for some $\tilde c>0$. Therefore, by \eqref{ct} and \eqref{pm} we deduce
\[
J(u)\le -\lambda_{-}\|u_{-}\|^{2}+C\, \|u\|^{4}-\tilde c\, \|u\|^{p+1}\qquad \text{for $u\in X_{-}\oplus X_{0}$},
\]
Since $p+1<4$, this implies that $J<0$ in $ B_{R}\cap \left(X_{-}\oplus X_{0}\right)\setminus \{0\}$ for an even smaller $R>0$.
The conclusion now follows from Theorem \ref{Perera}.
\end{proof}

\begin{remark}
The condition $F(t)\ge c |t|^{p+1}$ is only used to deal with the case $\dim X_{0}>0$. If  $\dim X_0=0$ it is not needed and the multiplicity result above holds under the s\^ole assumption \eqref{xmxm} with $p,q\in]1,2[$.
\end{remark}

\section{The superquadratic case}

Let us recall the assumptions we will use in this section to prove Theorem \ref{t5.12}:
\begin{itemize}
\item[$\left(  V_{0}\right)  $] $V\in C(\mathbb{R}^{3})$ is bounded from below
and $\left\vert \left\{  V\leq k\right\}  \right\vert <\infty$ for all
$k\in\mathbb{R}$,
\item[$\left(  V_{1}\right)  $] $V\in C^{1}(\mathbb{R}^{3})$ and there exists
$R>0$ such that
\[
W(x):=2V(x)+\nabla V(x)\cdot x\geq0\text{\qquad for }\left\vert x\right\vert
\geq R\text{.}%
\]
\item[$\left(  V_{2}\right)  $] There exists $\kappa>0$ such that%
\[
\left\vert \nabla V(x)\cdot x\right\vert \leq\kappa\left(  V(x)+m\right)
\text{\qquad for all }x\in\mathbb{R}^{3}\text{.}%
\]
\item[$\left(  f_{1}\right)  $] $f\in C(\mathbb{R})$ and there exists $C>0$,
$p\in\left(  1,5\right)  $ such that%
\[
\left\vert f(t)\right\vert \leq C(  \left\vert t\right\vert +\left\vert
t\right\vert ^{p})  \text{\qquad for all }t\in\mathbb{R}\text{,}%
\]
\item[$\left(  f_{2}\right)  $] There exists $\mu>3$ such that%
\[
f(t)t\geq\mu F(t)>0\text{\qquad for all }t\in\mathbb{R}\backslash\left\{
0\right\}  \,\text{.}%
\]
\end{itemize}

We first briefly discuss the meaning of the previous hypotheses, as well as some of their consequences.
\begin{itemize}
\item
Assumption $(V_{1})$ can be seen as a lack oscillation condition at infinity. For coercive radial potentials $V(x)=v(|x|)$ it can be rephrased requiring that $r\mapsto v(r)\, r^{2}$ is non-decreasing. An example of coercive potential failing to satisfy $(V_{1})$ is $V(x)=|x|^{2}+|x|\sin |x|^{2}$.
\item
Condition $(V_{2})$ rules out exponentially growing potentials for which the implication $u\in X\Rightarrow u(\lambda\, \cdot)\in X$ may fail for $\lambda\neq 1$. For example, if $V(x)=e^{|x|}$ and  $u=e^{-|x|}/(1+|x|^{4})$, then certainly $u\in X$  but $u(\lambda \, \cdot)$ fails to be in $X$ for any $\lambda \in \ ]0, 1[$. A quantitative version of this is given in Lemma \ref{l1}.
\item
Notice that condition $(f_{1})$ implies that $|F(t)|\le C\, (|t|^{2}+|t|^{p+1})$, $p<5$, so that $J$ is well defined on $X$. We avoid the critical case $p=5$, which would require a separate concentration-compactness analysis.
\item
Hypothesis $(f_{2})$ is a $3$-superlinear condition of Ambrosetti-Rabinowitz type. By standard arguments, it implies that $t\mapsto F(t)/|t|^{\mu-1}t$ is non-decreasing and therefore,
\begin{equation}
\label{mu0}
F(t)\le C\, |t|^{\mu} \quad \text{for $|t|\le 1$}, \qquad F(t)\ge C\, |t|^{\mu}\quad \text{for $|t|\ge 1$}.
\end{equation}
\end{itemize}

\begin{lemma}
\label{l1}
If $(V_{2})$ holds, then for any $t>0$, $x\in \R^{3}$
\begin{equation}
\label{el1}
\tilde{V}(tx)\le \max\{t^{\kappa}, t^{-\kappa}\}\, \tilde{V}(x).
\end{equation}
\end{lemma}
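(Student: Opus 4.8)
The plan is to reduce the multiplicative estimate \eqref{el1} to a one-dimensional differential inequality by restricting $\tilde V$ to the ray through $x$. Fix $x\in\mathbb{R}^3$ (the case $x=0$ being trivial, since $\max\{t^\kappa,t^{-\kappa}\}\ge 1$) and set $g(t):=\tilde V(tx)=V(tx)+m$ for $t>0$. Because $V\in C^1$ (as assumed in $(V_1)$) and $\tilde V>1>0$ everywhere by \eqref{em}, the function $g$ is positive and $C^1$ on $\left]0,+\infty\right[$, so $\log g$ is well defined and differentiable.

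First I would differentiate along the ray: $g'(t)=\nabla V(tx)\cdot x$, and after multiplying by $t$ this reads $t\,g'(t)=\nabla V(tx)\cdot(tx)$. Applying $(V_2)$ at the point $y=tx$ then gives
\[
|t\,g'(t)|=\left\vert \nabla V(tx)\cdot(tx)\right\vert\le \kappa\,\bigl(V(tx)+m\bigr)=\kappa\, g(t),
\]
and dividing by $t\,g(t)>0$ turns $(V_2)$ into a bound on the logarithmic derivative of $g$:
\[
\left\vert\frac{d}{dt}\log g(t)\right\vert=\left\vert\frac{g'(t)}{g(t)}\right\vert\le\frac{\kappa}{t},\qquad t>0.
\]

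The remaining step is a Gronwall-type integration between $1$ and $t$, splitting into the two cases dictated by the position of $t$ relative to $1$. For $t\ge 1$, integrating from $1$ to $t$ yields $\log g(t)-\log g(1)\le\int_1^t(\kappa/s)\,\mathrm{d}s=\kappa\log t$, hence $g(t)\le t^{\kappa}g(1)$, that is $\tilde V(tx)\le t^{\kappa}\tilde V(x)$. For $0<t<1$, integrating from $t$ to $1$ gives $\log g(1)-\log g(t)\le\int_t^1(\kappa/s)\,\mathrm{d}s=-\kappa\log t$, hence $g(t)\le t^{-\kappa}g(1)$, that is $\tilde V(tx)\le t^{-\kappa}\tilde V(x)$. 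Since $\max\{t^{\kappa},t^{-\kappa}\}$ equals $t^{\kappa}$ for $t\ge 1$ and $t^{-\kappa}$ for $t<1$, combining the two cases produces exactly \eqref{el1}.

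There is no substantial obstacle here: the only idea required is to restrict to the ray and recognize that $(V_2)$ is precisely a bound $\bigl|\tfrac{d}{dt}\log g(t)\bigr|\le \kappa/t$, after which a one-line integration finishes the argument. The sole point demanding care is the sign bookkeeping in the integration, which forces the split between $t\ge 1$ and $t<1$ and thereby the $\max\{t^{\kappa},t^{-\kappa}\}$ on the right-hand side.
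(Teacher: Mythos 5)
Your strategy is exactly the paper's: restrict $\tilde V$ to the ray through $x$, recognize $(V_2)$ as the logarithmic-derivative bound $\left\vert \frac{\mathrm{d}}{\mathrm{d}s}\log\tilde V(sx)\right\vert\le\kappa/s$ (using $\nabla\tilde V=\nabla V$), and integrate from $1$. Your case $t\ge1$ is correct and matches the paper's computation line by line; the paper then simply says the case $0<t<1$ is ``similar'' and omits it.

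It is in that omitted case that your write-up has a genuine error. The inequality you display,
\[
\log g(1)-\log g(t)\le\int_t^1\frac{\kappa}{s}\,\mathrm{d}s=-\kappa\log t,
\]
is true, but it rearranges to $g(1)\,t^{\kappa}\le g(t)$, i.e.\ a \emph{lower} bound $g(t)\ge t^{\kappa}g(1)$; it does not imply the claimed conclusion $g(t)\le t^{-\kappa}g(1)$. What you actually need is an upper bound on the opposite difference:
\[
\log g(t)-\log g(1)=-\int_t^1\frac{\mathrm{d}}{\mathrm{d}s}\log g(s)\,\mathrm{d}s
\le\int_t^1\left\vert\frac{\mathrm{d}}{\mathrm{d}s}\log g(s)\right\vert\mathrm{d}s
\le\int_t^1\frac{\kappa}{s}\,\mathrm{d}s=-\kappa\log t,
\]
which does give $g(t)\le t^{-\kappa}g(1)$. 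Since your derivative estimate is on the absolute value, both directions follow from it and the fix is one line; but as written the implication ``hence $g(t)\le t^{-\kappa}g(1)$'' does not follow from the inequality you stated --- ironically, this is precisely the ``sign bookkeeping'' you identified as the only delicate point.
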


\begin{proof}
For $t\ge1$ we have
\begin{align*}
\log\frac{\tilde{V}(tx)}{\tilde{V}(x)}  &  =\log\tilde{V}(tx)-\log\tilde
{V}(x)=\int_{1}^{t}\frac{\mathrm{d}}{\mathrm{d}s}
\log\tilde{V}(sx)\, \mathrm{d}s\\
&  =\int_{1}^{t}\frac{\nabla\tilde{V}(sx)\cdot\left(  sx\right)  }{\tilde
{V}(sx)}\, \frac{1}{s}\, \mathrm{d}s\\
&  \le\int_{1}^{t}\frac{\vert \nabla\tilde{V}(sx)\cdot(  sx)
\vert }{\tilde{V}(sx)}\, \frac{1}{s}\, \mathrm{d}s\le\int_{1}^{t}\frac{\kappa}%
{s}\, \mathrm{d}s=\log t^{\kappa}\text{.}%
\end{align*}
Therefore $\tilde{V}(tx)\le t^{\kappa}\, \tilde{V}(x)$. The argument for the case $0<t<1$ is similar.
\end{proof}

For any $t>0$ and $u\in X$ define
\begin{equation}
\label{ut}
u_{t}(x)=t^{2}\, u(tx),
\end{equation}
and define on the Hilbert space $\R\times X$ (with natural norm $\|(s, u)\|^{2}=s^{2}+\|u\|^{2}$) the augmented functional
\begin{equation}
\label{argeq}
\tilde{J}(s, u):=\frac{s^{2}}{2}+J(u_{e^{s}}).
\end{equation}
\begin{remark}\label{rkut}
  Obviously, for $s,t>0$, from \eqref{ut} we have $(u_t)_s=u_{ts}=(u_s)_t$.
\end{remark}
\begin{proposition}\label{proptj}
Assume $(V_{2})$ and $(f_{1})$. Then the functional $\tilde J$ is well defined on $\R\times X$, of class $C^{1}$ and
\begin{equation}
\label{tj}
\tilde J(s, u)=\frac{s^{2}}{2}+\frac{e^{3s}}{2}\int \left[|\nabla u|^{2}+\frac{1}{2}\phi_{u}\, u^{2}\right]\, \mathrm{d}x+\frac{e^{s}}{2}\int V(xe^{-s})\, u^{2}\, \mathrm{d}x-e^{-3s}\int F(e^{2s}u)\, \mathrm{d}x,
\end{equation}
\begin{align}
&\langle \partial_{u}\tilde J(s, u),  \varphi\rangle=e^{3s}\int \left[\nabla u\, \nabla \varphi+\phi_{u}\, u\, \varphi\right]\, \mathrm{d}x+e^{s}\int V(xe^{-s})\, u\, \varphi\, \mathrm{d}x-e^{-s}\int f(e^{2s}u)\, \varphi\, \mathrm{d}x,\label{tju}\\
&\partial_{s}\tilde J(s, u)=s+\frac{3}{2}e^{3s}\int \left[|\nabla u|^{2}+\frac{1}{2}\phi_{u}\, u^{2}\right]\, \mathrm{d}x+\frac{e^{s}}{2}\int \left[V(xe^{-s})-\nabla V(xe^{-s})\cdot xe^{-s}\right]\, u^{2}\, \mathrm{d}x\nonumber\\
&\qquad\qquad\qquad\qquad\qquad\qquad\quad- e^{-3s}\int \left[2f(e^{2s}u)\, e^{2s}u-3F(e^{2s} u)\right]\, \mathrm{d}x.\label{tjs}
\end{align}
\end{proposition}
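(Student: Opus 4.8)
The plan is to prove the three assertions in turn, working throughout from the explicit scaling rather than from the abstract composition $(s,u)\mapsto u_{e^s}$; as explained below, this choice is forced by the main difficulty.

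First, to derive \eqref{tj} I would compute $J(u_t)$ for $u_t(x)=t^2u(tx)$ term by term via the change of variables $y=tx$. A direct computation gives $\nabla u_t(x)=t^3(\nabla u)(tx)$, whence $\int|\nabla u_t|^2\, \mathrm{d}x=t^3\int|\nabla u|^2\, \mathrm{d}x$; since the unique $\mathcal{D}^{1,2}$-solution of $-\Delta\phi=u_t^2$ scales as $\phi_{u_t}(x)=t^2\phi_u(tx)$, one obtains $\int\phi_{u_t}\, u_t^2\, \mathrm{d}x=t^3\int\phi_u\, u^2\, \mathrm{d}x$. The remaining two terms transform as $\int V\, u_t^2\, \mathrm{d}x=t\int V(y/t)\, u^2\, \mathrm{d}y$ and $\int F(u_t)\, \mathrm{d}x=t^{-3}\int F(t^2u)\, \mathrm{d}y$. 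Setting $t=e^s$, adding $s^2/2$, and collecting terms yields \eqref{tj}.

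For well-definedness the gradient term is finite because $u\in X$, and the Poisson term is controlled by \eqref{ct}; the potential and nonlinear terms require the standing assumptions. Writing $V=\tilde V-m$ and applying Lemma \ref{l1} with $t=e^{-s}$ gives $\tilde V(xe^{-s})\le e^{|s|\kappa}\, \tilde V(x)$, so $\int|V(xe^{-s})|\, u^2\, \mathrm{d}x$ is finite. For the nonlinear term, $(f_1)$ yields $|F(t)|\le C(t^2+|t|^{p+1})$ with $p+1<6$, and the embeddings $X\hookrightarrow L^2\cap L^{p+1}$ make $\int|F(e^{2s}u)|\, \mathrm{d}x$ finite.

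Finally, for the $C^1$ regularity and the derivative formulas, the key obstruction is that the naive strategy of composing $J\in C^1(X)$ with the curve $s\mapsto u_{e^s}$ fails: formally $\tfrac{\mathrm{d}}{\mathrm{d}s}u_{e^s}$ involves $x\cdot\nabla u$, which need not lie in $X$ when $u\in X$, so the dilation path is not differentiable as an $X$-valued curve. Instead I would differentiate \eqref{tj} directly under the integral sign. The chain rule then produces exactly the stated expressions: differentiating $V(xe^{-s})$ in $s$ yields the term $-\nabla V(xe^{-s})\cdot(xe^{-s})$ of \eqref{tjs}, differentiating $e^{-3s}F(e^{2s}u)$ yields $-e^{-3s}[2f(e^{2s}u)\, e^{2s}u-3F(e^{2s}u)]$, and the directional derivatives in $u$ giving \eqref{tju} are immediate. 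The interchange of differentiation and integration is justified by dominated convergence on compact $s$-intervals, and this is where $(V_2)$ and $(f_1)$ are tailored: for the potential term, $(V_2)$ together with Lemma \ref{l1} bounds $|\nabla V(xe^{-s})\cdot xe^{-s}|\, u^2\le\kappa\, \tilde V(xe^{-s})\, u^2\le\kappa\, e^{|s|\kappa}\, \tilde V(x)\, u^2$, an integrable majorant uniform for $s$ in a bounded interval; for the nonlinear term, $(f_1)$ dominates the integrands by a fixed combination of $u^2$ and $|u|^{p+1}$. This shows the Gâteaux derivatives exist and equal \eqref{tju}--\eqref{tjs}; their joint continuity in $(s,u)$—the gradient and Poisson terms being clear, the potential term again by dominated convergence, and the nonlinear terms by continuity of the associated Nemytskii operators on $L^2\cap L^{p+1}$—then upgrades $\tilde J$ to class $C^1$. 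I expect this last uniform domination on $s$-intervals to be the only delicate point of the argument.
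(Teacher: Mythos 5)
Your proposal is correct and follows essentially the same route as the paper's own proof: both derive \eqref{tj} term by term via the change of variables and the scaling relation $\phi_{u_t}(x)=t^2\phi_u(tx)$, both control the potential term through Lemma \ref{l1}, and both obtain \eqref{tju}--\eqref{tjs} by differentiating under the integral sign, justified by dominated convergence with exactly the majorants supplied by $(V_2)$ (via Lemma \ref{l1}) and $(f_1)$, concluding $C^1$ regularity from continuity of the associated Nemytskii operators. The only difference is presentational: you make explicit the obstruction that $s\mapsto u_{e^s}$ is not differentiable as an $X$-valued curve (since $x\cdot\nabla u\notin X$ in general), a point the paper leaves implicit.
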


\begin{proof}
By changing variables, it suffices to prove the statement for $J(t, u)=J(u_{t})$ on $\R_{+}\times X$. A simple scaling argument shows that $\phi_{u_{t}}(x)=t^{2}\phi_{u}(tx)$,
so that the following change of variables is justified by $\phi_{u}\in L^{6}(\R^{3})$ and $u\in L^{12/5}(\R^{3})$
\[
\int \phi_{u_{t}}\, u_{t}^{2}\, \mathrm{d}x=t^{3}\int \phi_{u}\, u^{2}\, \mathrm{d}x.
\]
Similarly,
\begin{align*}
\int |\nabla u_{t}|^{2}\, \mathrm{d}x&=t^{3}\int |\nabla u|^{2}\, \mathrm{d}x\le t^{3}\|u\|^{2},\\
\left|\int F(u_{t})\, \mathrm{d}x\right|&\le Ct\int u^2\,\mathrm{d}x+C|t|^{2p-3}\int |u|^{p}\, \mathrm{d}x.
\end{align*}
For the potential term, thanks to the continuity of $V$ the change of variable $x=y/t$ is justified on any fixed ball $B_{R}$ and  the previous Lemma ensures
\[
\int_{B_{R}} \tilde V\, u_{t}^{2}\, \mathrm{d}x=t\int_{B_{tR}}\tilde V(y/t) u^{2}(y)\, dy\le \max\{t^{\kappa}, t^{-\kappa}\}\int_{B_{tR}}\tilde V\, u^{2}\, dy\le c_{t}\|u\|^{2},
\]
so that letting $R\to +\infty$ proves that $\tilde J$ is well-defined. Formula \eqref{tj} directly follows from changing variables.

Formula \eqref{tju} can be computed in a standard way, while \eqref{tjs} is obtained by deriving under the integral sign in \eqref{tj}. Observe that
\begin{equation}
\label{f1}
\left|2f(e^{2s}u)\, e^{2s}u-3F(e^{2s} u)\right|\le C(e^{4s}|u|^2+e^{2ps}|u|^{6})
\end{equation}
by the growth condition $(f_{1})$ and
\begin{align}
\left|V(xe^{-s})-\nabla V(xe^{-s})\cdot xe^{-s}\right|&\le \left|V(xe^{-s})\right|+\left|\nabla V(x e^{-s}) \cdot xe^{-s}\right|\nonumber\\
&\le m+(1+\kappa)\, \tilde V(xe^{-s}) \le m+(1+\kappa)\, e^{\kappa|s|}\,  \tilde V (x)\label{f2}
\end{align}
due to $|V|\le\tilde{V}+m$, $(V_{2})$ and Lemma \ref{l1}. Moreover both
\[
s\mapsto \int \left|2f(e^{2s}u)\, e^{2s}u-3F(e^{2s} u)\right|\, \mathrm{d}x, \qquad s\mapsto \int \left|V(xe^{-s})-\nabla V(xe^{-s})\cdot xe^{-s}\right| u^{2}\, \mathrm{d}x
\]
are continuous by dominated convergence and  standard arguments yields the differentiation formula \eqref{tjs}. Finally, the  estimates \eqref{f1}, \eqref{f2} ensure the continuity of the corresponding Nemitskii operators appearing in \eqref{tju} and \eqref{tjs}, so that $\tilde J$ is of class $C^{1}$.
\end{proof}

\begin{proposition}[Pohozaev identity]\label{poz}
Assume $(V_{2})$ and $(f_{1})$ and let $u$ be a critical point of $J$ on $X$. Then
\[
\left.\frac{\mathrm{d}}{\mathrm{d}t}\right|_{t=1}J(u_{t})=0.
\]
\end{proposition}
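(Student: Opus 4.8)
The plan is to reduce everything to the $C^{1}$ regularity of the augmented functional already established in Proposition \ref{proptj}. Writing $t=e^{s}$ and recalling $\tilde J(s,u)=s^{2}/2+J(u_{e^{s}})$ from \eqref{argeq}, that proposition guarantees that $s\mapsto\tilde J(s,u)$ is $C^{1}$, hence so is $t\mapsto J(u_{t})$, and the chain rule gives
\[
\left.\frac{\mathrm{d}}{\mathrm{d}t}\right|_{t=1}J(u_{t})=\partial_{s}\tilde J(0,u).
\]
Thus it suffices to show that the right-hand side, whose explicit form is obtained by setting $s=0$ in \eqref{tjs}, vanishes whenever $u$ is a critical point of $J$. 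My strategy is to recognise this quantity as $\langle J'(u),w\rangle$ for the infinitesimal dilation field $w:=\left.\frac{\mathrm{d}}{\mathrm{d}s}\right|_{s=0}u_{e^{s}}=2u+x\cdot\nabla u$, which must then be zero because $J'(u)=0$.

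To establish the identity $\partial_{s}\tilde J(0,u)=\langle J'(u),2u+x\cdot\nabla u\rangle$ I would integrate by parts term by term. For the Dirichlet term one gets $\int\nabla u\cdot\nabla(2u+x\cdot\nabla u)\,\mathrm{d}x=\tfrac32\int|\nabla u|^{2}\,\mathrm{d}x$ after using $\nabla\cdot x=3$; for the potential term $\int Vu\,(2u+x\cdot\nabla u)\,\mathrm{d}x=\tfrac12\int Vu^{2}\,\mathrm{d}x-\tfrac12\int(\nabla V\cdot x)u^{2}\,\mathrm{d}x$; and since $x\cdot\nabla F(u)=f(u)\,x\cdot\nabla u$, the reaction contributes $-2\int f(u)u\,\mathrm{d}x+3\int F(u)\,\mathrm{d}x$. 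For the nonlocal term it is cleanest to avoid differentiating $\phi_{u}$ and instead invoke the scaling $\phi_{u_{t}}(x)=t^{2}\phi_{u}(tx)$ used in Proposition \ref{proptj}, which gives $\int\phi_{u_{t}}u_{t}^{2}\,\mathrm{d}x=t^{3}\int\phi_{u}u^{2}\,\mathrm{d}x$ and hence the contribution $\tfrac34\int\phi_{u}u^{2}\,\mathrm{d}x$; equivalently, this is the Pohozaev identity $\int(x\cdot\nabla\phi_{u})u^{2}\,\mathrm{d}x=-\tfrac12\int\phi_{u}u^{2}\,\mathrm{d}x$ for $-\Delta\phi_{u}=u^{2}$. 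Summing these four contributions reproduces exactly \eqref{tjs} evaluated at $s=0$.

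The delicate point — and what I expect to be the main obstacle — is to make $w=2u+x\cdot\nabla u$ an admissible element of $X$ and to justify that the boundary terms in the integrations by parts above vanish. This requires regularity and decay of the critical point $u$: being a weak solution of $-\Delta u+Vu+\phi_{u}u=f(u)$ with subcritical growth $(f_{1})$, a standard bootstrap yields $u\in H^{2}_{\mathrm{loc}}\cap C^{1}_{\mathrm{loc}}$, while coercivity of $V$ forces $u$ and $\nabla u$ to decay fast at infinity (Agmon-type estimates). The membership $x\cdot\nabla u\in X$, in particular finiteness of the weighted norm $\int\tilde V\,|x\cdot\nabla u|^{2}\,\mathrm{d}x$, is where $(V_{2})$ enters: via Lemma \ref{l1} it confines $\tilde V$ to at most polynomial growth, which the decay of $\nabla u$ easily absorbs. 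I would therefore perform the integrations by parts on balls $B_{R}$ and let $R\to\infty$ along a sequence for which the spherical boundary integrals tend to zero, the decay estimates guaranteeing this.

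With the identity in hand the conclusion is immediate: since $u$ is a critical point, $\langle J'(u),\psi\rangle=0$ for every $\psi\in X$, in particular for $\psi=2u+x\cdot\nabla u$, whence $\left.\frac{\mathrm{d}}{\mathrm{d}t}\right|_{t=1}J(u_{t})=\partial_{s}\tilde J(0,u)=\langle J'(u),2u+x\cdot\nabla u\rangle=0$.
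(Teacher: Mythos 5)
Your reduction to showing $\partial_{s}\tilde J(0,u)=0$ is correct, and your formal term-by-term computation identifying \eqref{tjs} at $s=0$ with the pairing $\langle J'(u),2u+x\cdot\nabla u\rangle$ is also correct (including the nonlocal identity $\int (x\cdot\nabla\phi_{u})u^{2}\,\mathrm{d}x=-\tfrac12\int\phi_{u}u^{2}\,\mathrm{d}x$). The genuine gap sits exactly at the point you yourself flag as delicate, and the fix you propose does not work under the stated hypotheses. The proposition assumes only $(V_{2})$ and $(f_{1})$: no coercivity of $V$ is available here (for instance $V\equiv 1$, or any bounded sign-changing potential, satisfies $(V_{2})$), so there is no basis for ``Agmon-type'' decay of $u$, let alone of $\nabla u$. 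Even if one imports $(V_{0})$, that condition only says that sublevel sets of $V$ have finite measure; it does \emph{not} give $V(x)\to+\infty$ pointwise (think of thin tentacles of finite measure where $V$ stays bounded), so Agmon theory does not yield the pointwise decay of $u$ and $\nabla u$ that you would need to conclude $\int\tilde V\,|x|^{2}|\nabla u|^{2}\,\mathrm{d}x<\infty$, i.e.\ $2u+x\cdot\nabla u\in X$. Without that membership the pairing $\langle J'(u),2u+x\cdot\nabla u\rangle$ is not even defined, so criticality of $u$ cannot be invoked; and the vanishing of your spherical boundary terms cannot be ``guaranteed by the decay estimates,'' since those estimates are nowhere established.

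The paper's proof is engineered precisely to avoid this obstruction. It compares $\varphi(t)=J(u_{t})$ with $\psi(t)=J(u_{(t)})$, where $u_{(t)}(x)=u(tx)$, exploiting that $u_{t}-u_{(t)}=(t^{2}-1)u_{(t)}$ is a multiple of an element of $X$ (guaranteed by $(V_{2})$ via Lemma \ref{l1}); the mean value theorem together with $DJ(\xi_{t})\to DJ(u)=0$ then gives $\varphi'(1)=\psi'(1)$, and $\psi'(1)=0$ is the classical Pohozaev identity, proved by the local technique: multiply the strong form of the equation (valid since $u\in W^{2,2}_{\mathrm{loc}}$) by $x\cdot\nabla u$, integrate by parts on balls $B_{R}$, and kill the boundary terms along a suitable sequence $R_{n}\to\infty$ using \emph{only} the finiteness of the energy integrals $\int|\nabla u|^{2}\,\mathrm{d}x$, $\int\tilde V u^{2}\,\mathrm{d}x$, etc. --- no pointwise decay and no global admissibility of $x\cdot\nabla u$ are ever required. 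Your argument could be repaired in the same spirit: replace the decay-based justification by testing with cutoffs $\chi_{R}\,(x\cdot\nabla u)\in X$ (legitimate by local elliptic regularity) and show the cutoff errors vanish along a subsequence of radii because the relevant integrands are globally integrable. As written, however, the appeal to unproven decay is a real gap, not a technicality.
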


\begin{proof}
Let $u_{(t)}(x)=u(t x)$. The same argument used in the proof of Proposition \ref{proptj}
shows that under assumption $(V_{2})$ the curve $t\mapsto u_{(t)}$ is continuous in $X$ at $t=1$, and the functions $\varphi(t):=J(u_{t})$ and $\psi(t):= J(u_{(t)})$ are differentiable at $t=1$. By the mean value
theorem%
\begin{align}
\nonumber\varphi(t)-\psi(t) &  =J(u_{t})-J(u_{(t)})=\langle DJ(\xi_{t}%
),u_{t}-u_{(t)}\rangle \\
&  =\langle DJ(\xi_{t}),\left(  t^{2}-1\right)  u_{(t)}\rangle
\nonumber\\
&  =(  t^{2}-1)  \langle DJ(\xi_{t}),u_{(t)}\rangle
\label{xmq}
\end{align}
for some $\xi_{t}$ lying on the segment from $u_{t}$ to $u_{(t)}$.
Therefore $\xi_{t}\rightarrow u$ in $X$ as $t\rightarrow1$, because both
$u_{t}$ and $u_{(t)}$ possess this property. Consequently, $DJ(\xi
_{t})\rightarrow DJ(u)=0$ and%
\[
\left\vert \langle DJ(\xi_{t}),u_{(t)}\rangle \right\vert
\leq\left\Vert DJ(\xi_{t})\right\Vert \Vert u_{(t)}\Vert
\rightarrow0\qquad \text{for $t\to 1$}
\]
because $u_{(t)}$ is continuous. It follows from \eqref{xmq} and $\varphi(1)=\psi(1)$ that $\varphi^{\prime
}(1)=\psi^{\prime}(1)$, that is,%
\[
\left.  \frac{\mathrm{d}}{\mathrm{d}t}\right\vert _{t=1}J(u_{t})=\left.
\frac{\mathrm{d}}{\mathrm{d}t}\right\vert _{t=1}J(u_{(t)})\text{.}%
\]
By the usual form of the Pohozaev identity\footnote{This follows from the standard technique (see \cite{MR695535,MR695536}) of multiplying the strong form of the equation by $\nabla u\cdot x$ (notice that  $u\in W^{2,2}_{{\rm loc}}(\R^{3})$ by elliptic regularity), integrate by parts in $B_{R}$ and use the finiteness of the energy  to get rid of the corresponding boundary terms for a suitable sequence of radii $R_{n}\to +\infty$. The nonlocal term involving $\phi_{u}$ can be treated through \cite[Proof of Theorem 1.3]{MR2079817}.} the last term vanishes, thus proving the theorem.
\end{proof}

In the following, we denote by $\tilde{D}\tilde J$ the total differential of $\tilde J$ with respect to both variables $s$ and $u$.
\begin{lemma}\label{crt}
If $(V_{2})$ and $(f_{1})$ hold, then
\[
\tilde D\tilde{J}(\bar s,\bar u)=0\quad \Leftrightarrow \quad \text{$\bar s=0$  \ and \ $DJ(\bar u)=0$}.
\]
\end{lemma}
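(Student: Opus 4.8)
The plan is to exploit two structural features of $\tilde J$: the scaling $u\mapsto u_{t}$ is a one-parameter group of \emph{linear} homeomorphisms of $X$, and the $s$-derivative of $\tilde J$ encodes exactly the Pohozaev functional $w\mapsto \frac{d}{dt}\big|_{t=1}J(w_{t})$. Both implications will then collapse onto the Pohozaev identity of Proposition \ref{poz}. First I would record that, for each $t>0$, the map $u\mapsto u_{t}$ defined in \eqref{ut} is linear and, by the scaling estimates in the proof of Proposition \ref{proptj} together with Lemma \ref{l1}, sends $X$ into $X$; by Remark \ref{rkut} its inverse is $u\mapsto u_{1/t}$, so it is a linear homeomorphism of $X$. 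Differentiating $\tilde J(s,u)=\frac{s^{2}}{2}+J(u_{e^{s}})$ in $u$ and using that scaling is linear, for every $\varphi\in X$ one gets (consistently with \eqref{tju})
\[
\langle \partial_{u}\tilde J(s,u),\varphi\rangle=\langle DJ(u_{e^{s}}),\varphi_{e^{s}}\rangle .
\]
Since $\varphi\mapsto\varphi_{e^{s}}$ is onto $X$, the left-hand side vanishes for all $\varphi$ if and only if $DJ(u_{e^{s}})=0$, that is
\[
\partial_{u}\tilde J(s,u)=0 \Longleftrightarrow DJ(u_{e^{s}})=0 .
\]

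Next I would compute the $s$-derivative using the group property $(u_{e^{s}})_{e^{\sigma}}=u_{e^{s+\sigma}}$ from Remark \ref{rkut}. Writing $g(\sigma)=J\big((u_{e^{s}})_{e^{\sigma}}\big)$ and applying the chain rule yields
\[
\partial_{s}\tilde J(s,u)=s+g'(0)=s+\left.\frac{d}{dt}\right|_{t=1}J\big((u_{e^{s}})_{t}\big),
\]
so that $\partial_{s}\tilde J(s,u)$ equals $s$ plus the Pohozaev functional evaluated at $v:=u_{e^{s}}$. (This is merely the invariant repackaging of the explicit expression \eqref{tjs}.)

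With these two identities the lemma follows. For the implication $(\Rightarrow)$, assume $\tilde D\tilde J(\bar s,\bar u)=0$. The first identity gives that $v:=\bar u_{e^{\bar s}}$ is a critical point of $J$, whence Proposition \ref{poz} forces $\frac{d}{dt}\big|_{t=1}J(v_{t})=0$; the second identity then reduces the equation $\partial_{s}\tilde J(\bar s,\bar u)=0$ to $\bar s=0$, and consequently $v=\bar u_{e^{0}}=\bar u$, so $DJ(\bar u)=0$. For $(\Leftarrow)$, if $\bar s=0$ and $DJ(\bar u)=0$, then $\partial_{u}\tilde J(0,\bar u)=0$ by the first identity (since $\bar u_{e^{0}}=\bar u$), while the second identity together with Proposition \ref{poz} gives $\partial_{s}\tilde J(0,\bar u)=\frac{d}{dt}\big|_{t=1}J(\bar u_{t})=0$.

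The only genuinely non-routine ingredient is the Pohozaev identity already secured in Proposition \ref{poz}; everything else is bookkeeping with the linear scaling group. The step to handle with care is the equivalence $\partial_{u}\tilde J(s,u)=0\Longleftrightarrow DJ(u_{e^{s}})=0$, which hinges on $\varphi\mapsto\varphi_{e^{s}}$ being a bijection of $X$ — a fact that itself relies on assumption $(V_{2})$ through Lemma \ref{l1}, and would fail for the exponentially growing potentials ruled out there.
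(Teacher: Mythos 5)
Your proposal is correct and follows essentially the same route as the paper: both implications are collapsed onto the Pohozaev identity of Proposition \ref{poz} via the chain-rule structure $\tilde J(s,u)=\frac{s^{2}}{2}+J(u_{e^{s}})$ and the group property of Remark \ref{rkut}. The only differences are presentational---you extract the two identities $\langle\partial_{u}\tilde J(s,u),\varphi\rangle=\langle DJ(u_{e^{s}}),\varphi_{e^{s}}\rangle$ and $\partial_{s}\tilde J(s,u)=s+\left.\frac{\mathrm{d}}{\mathrm{d}t}\right|_{t=1}J\bigl((u_{e^{s}})_{t}\bigr)$ up front so that each direction becomes a one-line verification, and you make explicit that $\varphi\mapsto\varphi_{e^{s}}$ is a bijection of $X$ (via $(V_{2})$ and Lemma \ref{l1}), a point the paper uses only implicitly when it identifies $\partial_{u}\tilde J(\bar s,\bar u)=0$ with $DJ(\bar u_{e^{\bar s}})=0$ in the $(\Rightarrow)$ direction.
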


\begin{proof}
\begin{itemize}
\item[($\Leftarrow$)] From \eqref{tju}, it follows that $DJ(\bar u)=0$ implies $\partial_{u}\tilde J(0, \bar u)=0$. Therefore, it suffices to prove that $\partial_{s}\tilde J(0, \bar u)=0$. From $DJ(\bar u)=0$, Proposition \ref{poz} gives
\begin{equation}
\label{ep}
\left.\frac{\mathrm{d}}{\mathrm{d}t}\right|_{t=1}J(\bar u_{t})=0.
\end{equation}
The map $t\mapsto J(\bar u_{t})$ is $C^{1}$ by Proposition \ref{proptj} and
\[
J(\bar u_{t})=\tilde J(\log t, \bar u)-\frac{\log^{2}t}{2},
\]
so that \eqref{ep} reads
\[
0=\left.\frac{\mathrm{d}}{\mathrm{d}t}\right|_{t=1}\left(\tilde J(\log t, \bar u)-\frac{\log^{2}t}{2}\right)=\left.\left(\partial_{s}\tilde J(\log t, \bar u)\frac{1}{t}-\frac{\log t}{t}\right)\right|_{t=1}=\partial_{s}\tilde J(0, \bar u).
\]
\item[($\Rightarrow$)]From $\tilde D\tilde{J}(\bar s,\bar u)=0$, being $\tilde D=(\partial_{s}, \partial_{u})$, we immediately infer $0=\partial_{u}\tilde{J}(\bar s, \bar u)=\partial_{u} J(\bar u_{e^{\bar s}})$ and we only have to prove that $\bar s=0$. Proposition \ref{poz} applied to $v:=\bar u_{e^{\bar s}}$ gives
\[
\left.\frac{\mathrm{d}}{\mathrm{d}t}\right|_{t=1}J(v_{t})=0.
\]
The function $t\mapsto J(v_{t})$ is $C^{1}$ by Proposition \ref{proptj} and
\[
J(v_{t})=J(\bar u_{te^{\bar s}})=\tilde J(\bar s+\log t, \bar u)-\frac{(\bar s+\log t)^{2}}{2},
\]
where the first equality is due to Remark \ref{rkut}. By the Chain Rule and $\partial_{s}\tilde J(\bar s, \bar u)=0$ we have
\[
0=\left.\frac{\mathrm{d}}{\mathrm{d}t}\right|_{t=1}J(v_{t})=\left.\left(\partial_{s}\tilde J(\bar s+ \log t, \bar u)\frac{1}{t}-\frac{\bar s+\log t}{t}\right)\right|_{t=1}=\partial_{s}\tilde J(\bar s, \bar u)-\bar s=-\bar s.\qedhere
\]
\end{itemize}
\end{proof}

\begin{theorem}\label{PS}
Suppose that $(V_{0})$--$(V_{2})$ and $(f_{1})$--$(f_{2})$ hold. Then $\tilde{J}$ satisfies the $(PS)$ condition.
\end{theorem}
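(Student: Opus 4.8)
The plan is to verify the two halves of the $(PS)$ condition in turn: first the boundedness of an arbitrary $(PS)$ sequence $(s_n,u_n)$, and then its precompactness. Throughout I write $v_n:=(u_n)_{e^{s_n}}$, so that by \eqref{argeq} and Remark \ref{rkut} one has $\tilde J(s_n,u_n)=\tfrac{s_n^2}{2}+J(v_n)$ and, unwinding the chain rule exactly as in Lemma \ref{crt}, $\partial_s\tilde J(s_n,u_n)=s_n+P(v_n)$, where $P(v):=\frac{\mathrm d}{\mathrm dt}\big|_{t=1}J(v_t)$ is the Pohozaev functional whose explicit form is read off from \eqref{tjs} at $s=0$. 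The crucial algebraic identity I would isolate is
\[
3J(v)-P(v)=\frac12\int W\,v^2\,\mathrm dx+2\int\big(f(v)\,v-3F(v)\big)\,\mathrm dx,
\]
with $W=2V+\nabla V\cdot x$ as in $(V_1)$. Both summands on the right are essentially nonnegative: the first by $(V_1)$ away from $B_R$, and the second by the Ambrosetti--Rabinowitz inequality $(f_2)$, which in fact gives $f(v)v-3F(v)\ge(\mu-3)F(v)\ge0$.

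Boundedness is the heart of the matter, and the key point is that it requires only the energy bound $\tilde J(s_n,u_n)=O(1)$ together with $\partial_s\tilde J(s_n,u_n)=o(1)$ --- the scaling variable supplies exactly the Pohozaev information unavailable for the bare functional $J$ when $p\le 3$. Since $s_n^2\ge0$, the energy bound forces $J(v_n)\le O(1)$; combined with $\tfrac14\int\phi_{v_n}v_n^2\ge0$ and the splitting of $\int V v_n^2$ on $\{V\le M\}$, this yields $\|v_n\|^2\le O(1)+2\int F(v_n)+m\int v_n^2$. On the other hand $3\tilde J-\partial_s\tilde J=O(1)$ rewrites, via the identity above and $\tfrac{3}{2}s^2-s\ge-\tfrac16$, as $3J(v_n)-P(v_n)\le O(1)$, from which I would extract both $\int F(v_n)\le O(1)+C\int_{B_R}v_n^2$ and $s_n^2\le O(1)+C\int_{B_R}v_n^2$. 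I would then argue by contradiction: if $T_n:=\|v_n\|\to\infty$, set $w_n:=v_n/T_n$ and pass to $w_n\rightharpoonup w$ in $X$, with $w_n\to w$ in every $L^r$, $r\in[2,6[$, and a.e.\ (using the compact embedding from $(V_0)$). If $w\neq0$ then $|v_n|\to\infty$ a.e.\ on $\{w\ne0\}$, so by \eqref{mu0} one gets $\int F(v_n)\ge c\,T_n^\mu$, contradicting $\int F(v_n)\le O(1)+C\,T_n^2$ since $\mu>3>2$; if $w=0$ then $\int_{B_R}v_n^2=o(T_n^2)$ and $\int v_n^2=o(T_n^2)$, so the bound on $\|v_n\|^2$ reads $T_n^2\le O(1)+o(T_n^2)$, again impossible. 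Hence $\|v_n\|$ is bounded, and then $s_n$ is bounded by $s_n^2\le O(1)+C\int_{B_R}v_n^2$. Because $s_n$ stays in a compact interval, Lemma \ref{l1} makes the scaling $u_n\leftrightarrow v_n$ uniformly norm-equivalent, so $(s_n,u_n)$ is bounded in $\mathbb R\times X$.

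For precompactness I would pass to a subsequence with $s_n\to\bar s$ and $u_n\rightharpoonup\bar u$ in $X$, hence $u_n\to\bar u$ in $L^r$ for $r\in[2,6[$ and a.e. Writing the coercive scaled form $a_s(u,\varphi):=e^{3s}\int\nabla u\cdot\nabla\varphi+e^s\int\tilde V(xe^{-s})u\varphi$, formula \eqref{tju} gives
\[
\partial_u\tilde J(s_n,u_n)[u_n-\bar u]=a_{s_n}(u_n,u_n-\bar u)-m\,e^{s_n}\!\int u_n(u_n-\bar u)+e^{3s_n}\!\int\phi_{u_n}u_n(u_n-\bar u)-e^{-s_n}\!\int f(e^{2s_n}u_n)(u_n-\bar u).
\]
The left side is $o(1)$, and the last three terms tend to $0$ by the compact embeddings --- the nonlocal one because $\phi_{u_n}$ is bounded in $L^6$, the nonlinear one because $(f_1)$ with $p<5$ keeps all relevant exponents below $6$. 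Thus $a_{s_n}(u_n,u_n-\bar u)\to0$; splitting $a_{s_n}(u_n,u_n-\bar u)=\|u_n-\bar u\|_{s_n}^2+a_{s_n}(\bar u,u_n-\bar u)$, the cross term vanishes because $a_{\bar s}(\bar u,\cdot)$ is a bounded functional on $X$ while $u_n-\bar u\rightharpoonup0$, and the coefficients of $a_{s_n}$ converge to those of $a_{\bar s}$ as $s_n\to\bar s$; this forces $\|u_n-\bar u\|_{s_n}\to0$, hence $u_n\to\bar u$ in $X$ by Lemma \ref{l1}. The one genuinely delicate point throughout is the boundedness step: the interplay of the indefinite quadratic form, the nonnegative nonlocal term $\int\phi_u u^2$, and the merely $3$-superlinear (possibly $p\le3$) nonlinearity is precisely what defeats the usual Nehari or monotonicity arguments, and it is the combination $3\tilde J-\partial_s\tilde J$ encoding $3J-P$ that resolves it.
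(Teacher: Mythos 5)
Your proposal is correct, and its precompactness half is essentially the standard argument the paper delegates to \cite{MR2548724}; but your boundedness argument --- the heart of the theorem --- takes a genuinely different route. The paper works with the combination $(2\lambda-3)\tilde J-\partial_s\tilde J$ for $\lambda$ strictly between $3$ and $\mu$, so that a positive multiple of $e^{3s}\int\left[|\nabla u|^2+\phi_u u^2\right]\mathrm{d}x$ survives; via \eqref{RI} this produces the term $(\lambda-3)e^{3s}\int|u|^3\,\mathrm{d}x$, which absolutely dominates the H\"older error $-C_\lambda\left(e^{3s}\int|u|^3\,\mathrm{d}x\right)^{2/3}$ generated by the potential (estimated separately on $B_R$ and on $\{V<0\}$, the split being necessary because $W_\lambda=W+2(\lambda-3)V$ is nonnegative only where $V\ge 0$ and $|x|\ge R$). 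This yields \emph{direct} bounds on $|s_n|$, $|\nabla u_n|_2$, $|u_n|_3$ and $\int F$ --- no contradiction argument and no compactness --- followed by the $k$, $S$ bookkeeping with Lemma \ref{l1} to upgrade to a bound on $\|u_n\|$. You instead take the borderline multiplier $\lambda=3$, for which the gradient and nonlocal terms cancel identically, giving the clean identity $3J(v)-P(v)=\frac12\int W v^2\,\mathrm{d}x+2\int\left(f(v)v-3F(v)\right)\mathrm{d}x$; the price is that your bounds on $\int F(v_n)$ and on $s_n^2$ are only relative to $\int_{B_R}v_n^2\,\mathrm{d}x$, so you must close the argument with the normalized sequence $w_n=v_n/\|v_n\|$ and the dichotomy $w\neq 0$ versus $w=0$, which invokes \eqref{mu0} and the compact embedding from $(V_0)$ already at the boundedness stage. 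Both routes are valid under the stated hypotheses: yours is more economical (it avoids \eqref{RI}, the $\{V<0\}$ splitting and the $k$/$S$ step, and your version of the potential estimate needs only $(V_1)$ outside $B_R$), but it is purely qualitative; the paper's quantitative, $\lambda$-parametrized inequality \eqref{ps1}--\eqref{ps2} is reused verbatim in Lemma \ref{Ml}, where $\lambda>3$ is essential (with $\lambda=3$ the lower bound in \eqref{el2} would fail), so the extra effort does double duty in the Morse-theoretic part of the paper. One point you should flesh out in your final step: the vanishing of the cross term $a_{s_n}(\bar u,u_n-\bar u)$ requires the two-sided estimate $e^{-\kappa|s_n-\bar s|}\,\tilde V(xe^{-\bar s})\le\tilde V(xe^{-s_n})\le e^{\kappa|s_n-\bar s|}\,\tilde V(xe^{-\bar s})$, which follows from Lemma \ref{l1}; this is what makes your ``coefficients converge'' claim rigorous in the weighted $L^2$ sense.
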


\begin{proof}
Let $\{(s_{n}, u_{n})\}$ be a $(PS)$-sequence for $\tilde J$ in $\R\times X$. Then
\[
|\tilde J(s_{n}, u_{n})|+|\partial_{s}\tilde J(s_{n}, u_{n})|=O(1).
\]
Choose $\lambda\in \left]3, \mu\right[$, where $\mu>3$ is given by $(f_{2})$. Then
\begin{align*}
&(2\lambda-3)\tilde J(s, u)-\partial_{s}\tilde J(s, u)=\frac{2\lambda-3}{2}s^{2}-s+\frac{\lambda-3}{2} e^{3s}\int |\nabla u|^{2}\, \mathrm{d}x\\
&\qquad+\frac{\lambda-3}{2} e^{3s}\int\left[ |\nabla u|^{2}+\phi_{u}\, u^{2}\right]\, \mathrm{d}x +\frac{e^{s}}{2}\int\left[2(\lambda-2)
V(xe^{-s})+\nabla V(xe^{-s})\cdot xe^{-s}\right] u^{2}\, \mathrm{d}x\\
&\qquad+2e^{-3s}\int \left[f(e^{2s}u)\, e^{2s}u-\lambda F(e^{2s}u)\right]\, \mathrm{d}x.
\end{align*}
Using \eqref{RI} on the second integral and $(f_{2})$ on the last, we thus obtain
\begin{align}
&(2\lambda-3)\tilde J(s, u)-\partial_{s}\tilde J(s, u)\ge \frac{2\lambda-3}{2}s^{2}-s+\frac{\lambda-3}{2} e^{3s}\int |\nabla u|^{2}\, \mathrm{d}x+(\lambda-3)e^{3s}\int |u|^{3}\, \mathrm{d}x\nonumber\\
&\qquad+\frac{e^{s}}{2}\int\left[2(\lambda-2)V(xe^{-s})+\nabla V(xe^{-s})\cdot xe^{-s}\right] u^{2}\, \mathrm{d}x+2(\mu-\lambda)e^{-3s}\int F(e^{2s}u)\, \mathrm{d}x.\label{ps1}
\end{align}
The third integral is bounded from below through $(V_{0})$-$(V_{2})$ and H\"older's inequality. Indeed, set
\[
v(x)=e^{3s/2}u(xe^{s}),\qquad W_{\lambda}(x):=2(\lambda-2)V(x)+\nabla V(x)\cdot x.
\]
Then, by a change of variables,
\begin{equation}
\int\left[2(\lambda-2)V(xe^{-s})+\nabla V(xe^{-s})\cdot xe^{-s}\right] u^{2}\, \mathrm{d}x=\int W_{\lambda} \, v^{2}\, \mathrm{d}x.
\label{xxs}
\end{equation}
As $W_\lambda$ is bounded  on bounded sets, we let $C_{\lambda}\in \R$ be such that  $W_{\lambda}\ge -C_{\lambda}$ in $B_{R}$, $R$ given in $(V_{1})$. Then
\begin{equation}
\label{ps6}
\int_{B_{R}} W_{\lambda}\, v^{2}\, \mathrm{d}x\ge -C_{\lambda}\int_{B_{R}} v^{2}\, \mathrm{d}x\ge -C_{\lambda}\left(\int |v|^{3}\, \mathrm{d}x\right)^{\frac 2 3}.
\end{equation}
On $\R^{3}\setminus B_{R}$, we split the integral on the two sets $\{V\ge 0\}$ and $\{V< 0\}$, the latter having finite measure by $(V_{0})$. Because $2(\lambda-2)>2$, assumption $(V_{1})$ implies that
\[
W_{\lambda}(x)=2(\lambda-2)V(x)+\nabla V(x)\cdot x
\ge 2V(x)+\nabla V(x)\cdot x\ge0
\]
for $x\in\{V\ge 0\}\setminus B_{R}$. On the other hand, by $(V_{2})$ and $V\ge -m$ we have
\[
W_{\lambda}\ge 2(\lambda-2)V-\kappa(V+m)\ge -(2(\lambda-2)+\kappa)m \quad \text{on $\{V<0\}$}.
\]
We thus have, for some possibily larger $C_{\lambda}$
\begin{equation}
\label{ps7}
\int_{\R^{3}\setminus B_{R}}W_{\lambda}\, v^{2}\, \mathrm{d}x\ge \int_{\{V<0\}\setminus B_{R}}W_{\lambda}\, v^{2}\, \mathrm{d}x\ge -C_{\lambda}\int_{\{V<0\}} v^{2}\, \mathrm{d}x\ge -C_{\lambda}\, |\{V<0\}|^{\frac 1 3}\left(\int |v|^{3}\, \mathrm{d}x\right)^{\frac 2 3}.
\end{equation}
Combining \eqref{xxs}, \eqref{ps6}, \eqref{ps7} and computing $|v|_{3}$ in terms of $\vert u\vert_{3}$ by changing variable, we get
\begin{equation}
\label{ps2}
\int\left[2(\lambda-2)V(xe^{-s})+\nabla V(xe^{-s})\cdot xe^{-s}\right] u^{2}\, \mathrm{d}x\ge -C_{\lambda}\,  e^{s}\left(\int |u|^{3}\, \mathrm{d}x\right)^{\frac 2 3}.
\end{equation}
Inserting the latter into \eqref{ps1}, for our $(PS)$-sequence $\{(s_n,u_n)\}$, we have
\begin{align*}
O(1)&\ge (2\lambda-3)\tilde J(s_{n}, u_{n})-\partial_{s}\tilde J(s_{n}, u_{n})\\
&\ge
\frac{2\lambda-3}{2}s_{n}^{2}-s_{n}+\frac{\lambda-3}{2} e^{3s_{n}}\int |\nabla u_{n}|^{2}\, \mathrm{d}x+2(\mu-\lambda)e^{-3s_{n}}\int F(e^{2s_{n}}u_{n})\, \mathrm{d}x\\
&\qquad\qquad\qquad+(\lambda-3)e^{3s_{n}}\int |u_{n}|^{3}\, \mathrm{d}x-C_{\lambda}\left(e^{3s_{n}}\int |u_{n}|^{3}\, \mathrm{d}x\right)^{\frac 2 3}.
\end{align*}
From $\lambda>3$ we infer $(\lambda-3)\xi_n-C_{\lambda}\xi_n^{2/3}\to +\infty$ if  $\xi_n=e^{3s_{n}}|u_{n}|_{3}^{3}\to +\infty$, while also using $\mu-\lambda>0$ and $F\ge 0$ we deduce from the previous estimate that
\begin{equation}
\label{ps3}
|s_{n}|,\quad  \int |\nabla u_{n}|^{2}\, \mathrm{d}x,\quad  \int |u_{n}|^{3}\, \mathrm{d}x,\quad \int F(e^{2s_{n}}u_{n})\quad \text{are bounded,}
\end{equation}
and recalling that $\tilde{J}(s_{n}, u_{n})=O(1)$ we also get through the previous bounds
\begin{equation}
\label{ps4}
\int V(xe^{-s_{n}})\, u_{n}^{2}\, \mathrm{d}x\le O(1).
\end{equation}
To complete the proof of the boundedness of $\|u_{n}\|$, let $S\geq1$ be such that $\left\vert \kappa s_{n}\right\vert \leq S$.
Applying Lemma \ref{l1} for $x$ being $xe^{-s_{n}}$ and $t$ being $e^{s_{n}}$, we
get%
\begin{equation}
V(x)+m\leq e^{S}\left(  V(xe^{-s_{n}})+m\right)  \text{.}\label{haide}%
\end{equation}
Choose $k\geq m$ large enough such that%
\[
\frac{1}{2}e^{-S}k\geq\left(  1-e^{-S}\right)  m\text{.}%
\]
Using \eqref{haide}, if $V(x)> k$, we have%
\[
V(xe^{-s_{n}})\geq e^{-S}V(x)-\left(  1-e^{-S}\right)  m\geq\frac{e^{-S}}%
{2}V(x)\text{.}%
\]
Thus, also using $V\geq-m$ on $\left\{  V\le k\right\}  $, we deduce%
\begin{align*}
\int V(xe^{-s_{n}})u_{n}^{2}\,\mathrm{d}x  & = \int_{\left\{
V>k\right\}  }V(xe^{-s_{n}}%
)u_{n}^{2}\,\mathrm{d}x+\int_{\left\{  V\leq k\right\}  }V(xe^{-s_{n}}%
)u_{n}^{2}\,\mathrm{d}x \\
& \geq\frac{e^{-S}}{2}\int_{\left\{  V>k\right\}  }Vu_{n}^{2}\,\mathrm{d}%
x-m\int_{\left\{  V\leq k\right\}  }u_{n}^{2}\,\mathrm{d}x\\
& \geq\frac{e^{-S}}{2}\int_{\left\{  V>k\right\}  }Vu_{n}^{2}\,\mathrm{d}%
x-m\left\vert \left\{  V\leq k\right\}  \right\vert ^{1/3}\left(
\int\left\vert u_{n}\right\vert ^{3}\right)  ^{2/3}\\
& \geq\frac{e^{-S}}{2}\int_{\left\{  V>k\right\}  }Vu_{n}^{2}\,\mathrm{d}%
x-O(1)\text{,}%
\end{align*}
where we used $(V_{0})$ and \eqref{ps3} in the last inequality. From \eqref{ps4} we thus infer
\[
 \int_{\{V>k\}}V\, u_{n}^{2}\, \mathrm{d}x\le O(1).
\]
Finally, due to $k>m$  it holds $V+m\le 2V$ on the set $\{V>k\}$ and $V+m\le 2k$ on $\{V\le k\}$, thus
\begin{align*}
\|u_{n}\|^{2}&\le \int |\nabla u_{n}|^{2}\, \mathrm{d}x+2\int_{\{V>k\}}V\, u_{n}^{2}\, \mathrm{d}x+2k\int_{\{V\le k\}}u_{n}^{2}\, \mathrm{d}x\\
& \le  \int |\nabla u_{n}|^{2}\, \mathrm{d}x+2\int_{\{V>k\}}V\, u_{n}^{2}\, \mathrm{d}x+2k\, |\{V\le k\}|^{\frac 1 3}\left(\int |u_{n}|^{3}\, \mathrm{d}x\right)^{\frac 2 3}\le O(1)
\end{align*}
by \eqref{ps3}, proving the boundedness of $\{u_{n}\}$ in $X$. Finally, the proof of the strong compactness of $\{u_{n}\}$ again follows as in \cite{MR2548724} thanks to the compactness of $X\hookrightarrow L^{p}(\R^{N})$ for $p\in [2, 6[$.
\end{proof}

\begin{lemma}\label{Ml}
Assume $(f_{1})$-$(f_{2})$, $(V_{0})$-$(V_{2})$. For any $\lambda\in \ ]3, \mu[$, there exists $M_{\lambda}\ge 0$ such that
\begin{equation}
\label{Mlemma}
\frac{\mathrm{d}}{\mathrm{d}t} \tilde J(\tau, u_{t})\le \frac{2\lambda-3}{t}\left(\tilde J(\tau, u_{t})+M_{\lambda}\right),\qquad t>0, u\in X, \tau\in \R.
\end{equation}
\end{lemma}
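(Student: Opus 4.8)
The plan is to recognize that Lemma \ref{Ml} is, after passing along the scaling flow $t\mapsto u_t$, essentially a pointwise reformulation of the differential estimate already established inside the proof of Theorem \ref{PS}. First I would record the exact scaling identity for $\tilde J(\tau,u_t)$. Since $(u_t)_{e^\tau}=u_{te^\tau}=u_{e^{\tau+\log t}}$ by Remark \ref{rkut}, the definition \eqref{argeq} gives, for fixed $\tau\in\R$ and $u\in X$,
\[
\tilde J(\tau,u_t)=\frac{\tau^2}{2}+J(u_{te^\tau})=\tilde J(\tau+\log t,u)+\frac{\tau^2-(\tau+\log t)^2}{2},\qquad t>0.
\]
Writing $s=s(t):=\tau+\log t$ and differentiating in $t$ (the map $t\mapsto\tilde J(\tau+\log t,u)$ is $C^1$ on $]0,\infty[$ since $s\mapsto\tilde J(s,u)$ is $C^1$ by Proposition \ref{proptj}), one obtains the two relations
\[
\frac{\mathrm{d}}{\mathrm{d}t}\tilde J(\tau,u_t)=\frac1t\left[\partial_s\tilde J(s,u)-s\right],\qquad \tilde J(\tau,u_t)=\frac{\tau^2}{2}+\tilde J(s,u)-\frac{s^2}{2}.
\]

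Next I would insert these into the claimed inequality \eqref{Mlemma} and clear the positive factor $1/t$. The inequality to be proved then becomes equivalent to
\[
(2\lambda-3)\tilde J(s,u)-\partial_s\tilde J(s,u)\ge \frac{2\lambda-3}{2}s^2-s-\frac{2\lambda-3}{2}\tau^2-(2\lambda-3)M_\lambda.
\]
Because the term $-\tfrac{2\lambda-3}{2}\tau^2$ is nonpositive, it is enough to prove the stronger, $\tau$-independent bound obtained by discarding it: the parameter $\tau$ only relaxes the target, and the whole problem reduces to a uniform lower estimate for $(2\lambda-3)\tilde J(s,u)-\partial_s\tilde J(s,u)$ of the form $\tfrac{2\lambda-3}{2}s^2-s-(2\lambda-3)M_\lambda$, valid for every $(s,u)\in\R\times X$.

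To establish this I would reuse the computation from the proof of Theorem \ref{PS} verbatim: expanding $(2\lambda-3)\tilde J-\partial_s\tilde J$ and combining \eqref{RI}, assumption $(f_{2})$, and the potential lower bound \eqref{ps2} yields, for all $(s,u)$,
\[
(2\lambda-3)\tilde J(s,u)-\partial_s\tilde J(s,u)\ge \frac{2\lambda-3}{2}s^2-s+\frac{\lambda-3}{2}e^{3s}\int|\nabla u|^2\,\mathrm{d}x+2(\mu-\lambda)e^{-3s}\int F(e^{2s}u)\,\mathrm{d}x+(\lambda-3)\,\xi-C_\lambda\,\xi^{2/3},
\]
where $\xi:=e^{3s}\int|u|^3\,\mathrm{d}x\ge0$. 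The two integral terms are nonnegative, since $\lambda>3$ and $\mu>\lambda$ with $F\ge0$, so they may be dropped. It then remains only to observe that, as $\lambda>3$, the one-variable function $\xi\mapsto(\lambda-3)\xi-C_\lambda\xi^{2/3}$ is bounded from below on $[0,\infty[$ by some explicit constant $-\tilde M_\lambda\le0$ (the minimum being attained at $\xi_*=(2C_\lambda/(3(\lambda-3)))^3$). Choosing $M_\lambda:=\tilde M_\lambda/(2\lambda-3)\ge0$ gives $(\lambda-3)\xi-C_\lambda\xi^{2/3}\ge-(2\lambda-3)M_\lambda$ for all $\xi\ge0$, which is precisely the reduced bound, and hence concludes the proof.

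The only genuinely new input beyond the $(PS)$ computation is this final elementary remark that the superlinear term $(\lambda-3)\xi$ dominates the sublinear correction $C_\lambda\xi^{2/3}$ exactly because $\lambda>3$; everything else is bookkeeping. Accordingly, the step I expect to demand the most care is not any estimate but the clean derivation of the scaling identity and of the formula for $\tfrac{\mathrm{d}}{\mathrm{d}t}\tilde J(\tau,u_t)$, since one must track the additive $s^2/2$ and $\tau^2/2$ corrections that link $\tilde J(\tau,u_t)$ to the partial derivative $\partial_s\tilde J(s,u)$ supplied by the $(PS)$ estimate.
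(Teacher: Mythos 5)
Your proposal is correct and follows essentially the same route as the paper: both reduce \eqref{Mlemma}, via the scaling identity and the chain rule, to the uniform estimate $(2\lambda-3)\tilde J(s,u)-\partial_s\tilde J(s,u)\ge\frac{2\lambda-3}{2}s^2-s-(2\lambda-3)M_\lambda$ (the paper's \eqref{el2}), and then prove it by recycling \eqref{ps1} and \eqref{ps2} from the $(PS)$ proof and observing that $(\lambda-3)\xi-C_\lambda\xi^{2/3}$ is bounded below for $\xi\ge0$ since $\lambda>3$. The only cosmetic differences are that you parametrize with $s=\tau+\log t$ acting on $u$ while the paper uses $s=\log t$ acting on $v=u_{e^\tau}$, and that you discard the harmless $-\frac{2\lambda-3}{2}\tau^2$ term at the outset whereas the paper drops the equivalent $\tau^2/2$ contribution at the end.
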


\begin{proof}
The estimate is independent of $\tau$ and $u$, so we let $v=u_{e^{\tau}}$ and observe that
\begin{align}
\label{pxs}\tilde J(\tau, u_{t})=\frac{\tau^{2}}{2}+ J(u_{te^{\tau}})&=\frac{\tau^{2}}{2}-\frac{\log^{2} t}{2}+ \tilde J(\log t, v)\ge \tilde J(\log t, v)-\frac{\log^{2} t}{2}\\
\label{qxs} \frac{\mathrm{d}}{\mathrm{d}t} \tilde J(\tau, u_{t})&=-\frac{\log t}{t}+\frac{1}{t}\partial_{s} \tilde J(\log t, v).
\end{align}
We claim that for given $\lambda\in \ ]3, \mu[$, there exists $M_\lambda>0$ such that
\begin{equation}
\label{el2}
\partial_{s}\tilde J(s, v)-s\le (2\lambda-3)\left(\tilde J (s, v)-\frac{s^{2}}{2}+M_\lambda\right)
\end{equation}
for all $s\in\R$ and $v\in X$. Then, using \eqref{pxs} and \eqref{qxs}, with $s=\log t$ and $v=u_{e^\tau}$ in \eqref{el2} we deduce
\begin{align*}
\frac{\mathrm{d}}{\mathrm{d}t} \tilde J(\tau, u_{t})&=-\frac{\log t}{t}+\frac{1}{t}\partial_{s} \tilde J(\log t, v)\\
&\le\frac{2\lambda-3}{t}\left(\tilde J (\log t, u_{e^\tau})-\frac{\log^{2}t}{2}+M_{\lambda}\right)=\frac{2\lambda-3}{t}\left(J(u_{te^\tau})+M_{\lambda}\right)\\
&\le \frac{2\lambda-3}{t}\left(\tilde J(\tau, u_{t})+M_{\lambda}\right),
\end{align*}
proving \eqref{Mlemma}.
To prove \eqref{el2} ignore the nonnegative terms involving $\nabla u$ and $F$ in \eqref{ps1} and use \eqref{ps2} to get
\[
(2\lambda -3)\tilde J(s, v)-\partial_{s}\tilde J(s, v)\ge\frac{2\lambda -3}{2}s^{2} -s+(\lambda-3)\, e^{3s}\int |v|^{3}\, \mathrm{d}x-C_{\lambda}\left(e^{3s}\int |v|^{3}\, \mathrm{d}x\right)^{\frac 2 3}.
\]
The last two terms are bounded from below thanks to $\lambda>3$, thus \eqref{el2} is proved.
\end{proof}

\begin{lemma}
Suppose $(f_{1})$-$(f_{2})$ and $(V_{1})$ hold true. Then, for any $(s, u)\in \R\times X\setminus\{0\}$ it holds
\[
\lim_{t\to+\infty} \tilde J(s, u_{t})=-\infty.
\]
\end{lemma}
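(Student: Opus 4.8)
The plan is to reduce everything to the behaviour of $J$ along the scaling $u\mapsto u_\tau$ and then exploit that the $3$-superlinearity of $f$ beats all other terms. By Remark \ref{rkut} one has $(u_t)_{e^s}=u_{te^s}$, so $\tilde J(s,u_t)=\tfrac{s^2}{2}+J(u_{te^s})$; since $\tfrac{s^2}{2}$ is a fixed constant and $\tau:=te^s\to+\infty$ as $t\to+\infty$, it suffices to show $J(u_\tau)\to-\infty$ as $\tau\to+\infty$ for fixed $u\neq0$ (the hypothesis $u\neq 0$ is essential, as for $u=0$ the curve is constant). Reading the scaling off \eqref{tj} with $e^s$ replaced by $\tau$, I would write
\[
J(u_\tau)=\frac{\tau^{3}}{2}\int\left[|\nabla u|^{2}+\tfrac12\phi_{u}\,u^{2}\right]\mathrm{d}x+\frac{\tau}{2}\int V(x/\tau)\,u^{2}\,\mathrm{d}x-\tau^{-3}\int F(\tau^{2}u)\,\mathrm{d}x=:K\tau^{3}+\frac{\tau}{2}P(\tau)-\tau^{-3}\int F(\tau^{2}u)\,\mathrm{d}x,
\]
where $K\ge0$ is finite by \eqref{ct}. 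The three blocks have, respectively, orders $\tau^{3}$, (to be shown) $o(\tau^{3})$, and at least $\tau^{2\mu-3}$.

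For the nonlinear term I would use the Ambrosetti--Rabinowitz bound \eqref{mu0}: since $F(t)\ge C|t|^{\mu}$ for $|t|\ge1$ and $F\ge0$, on the set $\{|u|\ge\tau^{-2}\}$ one has $F(\tau^{2}u)\ge C\tau^{2\mu}|u|^{\mu}$, whence
\[
\tau^{-3}\int F(\tau^{2}u)\,\mathrm{d}x\ge C\,\tau^{2\mu-3}\int_{\{|u|\ge\tau^{-2}\}}|u|^{\mu}\,\mathrm{d}x .
\]
Because $X\hookrightarrow L^{\mu}$ (as $\mu\le p+1<6$ by $(f_1)$--$(f_2)$) and $u\neq0$, the last integral increases to $\int|u|^{\mu}>0$, so for large $\tau$ this term is $\ge c_{0}\tau^{2\mu-3}$ with $c_{0}>0$. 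The crucial gain is that $2\mu-3>3$ by $(f_2)$.

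The main obstacle, and the only delicate point, is controlling the potential term $\tfrac{\tau}{2}P(\tau)$, which could a priori outgrow $\tau^{2\mu-3}$. The naive bound from Lemma \ref{l1}, namely $\tilde V(x/\tau)\le\tau^{\kappa}\tilde V(x)$, only gives $P(\tau)\le\tau^{\kappa}\|u\|^{2}$, which is useless when $\kappa$ is large. Here I would bring in $(V_1)$, not used so far: after enlarging $R$ (legitimate since $(V_0)$ makes $V$ coercive), $(V_1)$ states precisely that $r\mapsto r^{2}V(r\omega)$ is non-decreasing for $r\ge R$, because its derivative equals $r\,W(r\omega)\ge0$. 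Comparing the radii $|x|/\tau$ and $|x|$ on the ray through $x$ for $|x|>R\tau$ (so that both exceed $R$, using $\tau\ge1$) yields the scaling-compatible bound $V(x/\tau)\le\tau^{2}V(x)$. Splitting $P(\tau)=\int V(x/\tau)u^{2}$ over $\{|x|\le R\tau\}$ and $\{|x|>R\tau\}$, the inner region contributes at most $O(1)$ because $\tilde V(x/\tau)$ is bounded where $|x/\tau|\le R$, while the outer region is at most $\tau^{2}\int_{|x|>R\tau}\tilde V\,u^{2}=\tau^{2}\,\Phi(R\tau)$, with $\Phi(S):=\int_{|x|>S}\tilde V\,u^{2}\to0$ since $\int\tilde V\,u^{2}<\infty$. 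Hence $\tfrac{\tau}{2}P(\tau)\le O(\tau)+\tfrac{\tau^{3}}{2}\Phi(R\tau)=o(\tau^{3})$.

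Putting the three estimates together gives $J(u_\tau)\le O(\tau^{3})+o(\tau^{3})-c_{0}\tau^{2\mu-3}$, and since $2\mu-3>3$ the last term dominates, so $J(u_\tau)\to-\infty$, as desired. All the remaining steps are routine bookkeeping with the scaling relations; the one place where genuine structure is needed is the replacement of the lossy Lemma \ref{l1} estimate by the sharp scaling $V(x/\tau)\le\tau^{2}V(x)$ coming from $(V_1)$.
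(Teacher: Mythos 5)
Your proposal is correct and follows essentially the same route as the paper: reduce to $J(u_\tau)$ via Remark \ref{rkut}, get the dominating $-c_0\tau^{2\mu-3}$ term from \eqref{mu0} with $2\mu-3>3$, bound the kinetic/nonlocal block by $O(\tau^3)$, and control the potential term by splitting at radius $R\tau$ and exploiting the monotonicity of $r\mapsto r^2V(r\omega)$ furnished by $(V_1)$. The only (cosmetic) difference is in the outer region: the paper changes variables and applies monotone convergence to $H(x/t)\,w^2\searrow 0$ with $H=\tilde V\,|x|^2\chi_{\R^3\setminus B_R}$, whereas you use the equivalent pointwise bound $V(x/\tau)\le\tau^2\tilde V(x)$ together with the vanishing tail of $\int\tilde V\,u^2\,\mathrm{d}x$ — the same underlying inequality, organized slightly differently.
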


\begin{proof}
Considering $v=u_{e^{s}}$ it suffices to prove that $J(v_{t})\to -\infty$ as $t\to +\infty$. As in the proof of \eqref{tj} we get
\[
J(v_{t})=\frac{t^{3}}{2}\int\left[|\nabla v|^{2}+\frac12\phi_{v}\, v^{2}\right]\, \mathrm{d}x+\frac{t}{2}\int V\, t^{3}v^{2}(tx)\, \mathrm{d}x-t^{-3}\int F(t^{2}v)\, \mathrm{d}x.
\]
Since $v\ne 0$, we can suppose that for some $\eps>0$, $|\{|v|\ge \eps\}|$ is finite and positive and by \eqref{mu0} we have
\[
\int F(t^{2}v)\, \mathrm{d}x\ge C\int_{\{|v|\ge \eps\}} t^{2\mu}\, |v|^{\mu}\, \mathrm{d}x\ge C\, \eps^{\mu}\, |\{|v|\ge \eps\}|\, t^{2\mu}=:C_{v}\, t^{2\mu},
\]
for some $C_{v}>0$ and $t^{2}\ge 1/\eps$.
$V$ is bounded on $B_{R}$, therefore
\[
\int_{B_{R}} V\, t^{3}v^{2}(tx)\, \mathrm{d}x\le \|V\|_{L^{\infty}(B_{R})}\int t^{3}v^{2}(tx)\, \mathrm{d}x=\|V\|_{L^{\infty}(B_{R})}\int v^{2}\, \mathrm{d}x.
\]
Assumption $(V_{1})$ implies that for any $|\omega|=R$ and $r\ge 1$
\[
\frac{\mathrm{d}}{\mathrm{d}r}\left(\tilde V(r\omega) r^{2}\right)=r\left(2\tilde V(r\omega)+\nabla V(r\omega)\cdot r\omega\right)\ge 0,
\]
so that
\[
H(x)=\tilde V(x)\, |x|^{2}\, \chi_{\R^{3}\setminus B_{R}}(x)\quad \text{is radially non-decreasing}.
\]
Letting $w(x)=v (x)/|x|$, we  have
\[
\int_{\R^{3}\setminus B_{R}} \tilde V\, t^{3}v^{2}(tx)\, \mathrm{d}x=t^{2}\int H\, t^{3} w^{2}(tx)\, \mathrm{d}x=t^{2}\int H(x/t)\, w^{2}\, \mathrm{d}x,
\]
and by the monotonicity of $H$, $H(x/t)\, w^{2}\searrow 0$ as $t\to +\infty$, so that by monotone convergence the last integral vanishes as $t\to +\infty$. Therefore
\[
\int V\, t^{3}v^{2}(tx)\, \mathrm{d}x\le \int_{B_{R}} V\, t^{3}v^{2}(tx)\, \mathrm{d}x+\int_{\R^3\setminus B_{R}} \tilde V\, t^{3}v^{2}(tx)\, \mathrm{d}x\le  \|V\|_{L^{\infty}(B_{R})}\int v^{2}\, \mathrm{d}x+o(t^{2}).
\]
Summing up,
\[
J(v_{t})\le\frac{t^{3}}2\int\left[|\nabla v|^{2}+\phi_{v}\, v^{2}\right]\, \mathrm{d}x+\frac{t\,\|V\|_{L^{\infty}(B_{R})}}{2}\int v^{2}\, \mathrm{d}x+o(t^{3})-C_{v}\, t^{2\mu-3}\to -\infty,
\]
as $t\to+\infty$, because  $2\mu-3>3$.
\end{proof}

\begin{theorem}\label{H}
Assume $(f_{1})$-$(f_{2})$ and $(V_{0})$-$(V_{2})$. Then for any sufficiently negative $a\in\R$, it holds
\[
H_{q}(\R\times X, \{\tilde J\le a\})=0\qquad\text{for any $q\in\{0,1,2,\ldots,\}$.}
\]

\end{theorem}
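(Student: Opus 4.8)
The plan is to compare the sublevel set $\{\tilde J\le a\}$ not with all of $\R\times X$ directly, but with the open set $\Omega:=\R\times(X\setminus\{0\})$, and to exploit the fact that an infinite-dimensional Hilbert space punctured at the origin is contractible. Throughout I fix $\lambda\in\ ]3,\mu[$, let $M_\lambda\ge 0$ be as in Lemma \ref{Ml}, and choose $a<-M_\lambda$; this is the meaning of ``sufficiently negative''. The first observation is that $\{\tilde J\le a\}\subset\Omega$, since $\tilde J(s,0)=s^2/2\ge 0>a$ for every $s$. The argument then splits into three facts: (i) $\{\tilde J\le a\}$ is a strong deformation retract of $\Omega$; (ii) $\Omega$ is contractible; (iii) consequently $H_q(\R\times X,\{\tilde J\le a\})\cong H_q(\R\times X,\Omega)=0$.

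For (i) I would use the scaling flow $t\mapsto(s,u_t)$, which leaves $\Omega$ invariant because $u_t\ne 0$ whenever $u\ne 0$. Fix $(s,u)\in\Omega$ and set $g(t):=\tilde J(s,u_t)$, a $C^1$ function of $t>0$ by Proposition \ref{proptj}. The preceding lemma gives $g(t)\to-\infty$ as $t\to+\infty$, while Lemma \ref{Ml} yields, at any $t$ with $g(t)=a$,
\[
g'(t)\le \frac{2\lambda-3}{t}\,(a+M_\lambda)<0,
\]
because $2\lambda-3>0$ and $a+M_\lambda<0$. Thus $g$ can cross the level $a$ only downward, so $\{t>0:g(t)\le a\}$ is a half-line $[T(s,u),+\infty)$, where $T(s,u)$ is the unique crossing time when $g(1)>a$ and $T(s,u):=1$ when $g(1)\le a$. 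The nonvanishing of $g'$ at the crossing, together with the $C^1$ dependence of $\tilde J$ and the continuity of the scaling map in $(s,u)$, makes $T$ continuous by the implicit function theorem, and $T\equiv 1$ precisely on $\{\tilde J\le a\}$. Flowing each point from $t=1$ to $t=T(s,u)$, interpolating in $\log t$ so as to remain in $\Omega$, then defines a strong deformation retraction of $\Omega$ onto $\{\tilde J\le a\}$.

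For (ii), recall that the unit sphere of the infinite-dimensional Hilbert space $X$ is contractible; since $X\setminus\{0\}$ radially deformation retracts onto that sphere, $X\setminus\{0\}$ is contractible, hence so is $\Omega=\R\times(X\setminus\{0\})$. Thus both $\R\times X$ and $\Omega$ are contractible, and the long exact sequence of the pair $(\R\times X,\Omega)$ forces $H_q(\R\times X,\Omega)=0$ for all $q$. Finally, combining (i) and (ii) through the long exact sequence of the triple $(\R\times X,\Omega,\{\tilde J\le a\})$, in which $H_q(\Omega,\{\tilde J\le a\})=0$ by (i), gives $H_q(\R\times X,\{\tilde J\le a\})\cong H_q(\R\times X,\Omega)=0$, which is the claim.

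The delicate point is exactly why one must pass through $\Omega$ rather than retract all of $\R\times X$ onto $\{\tilde J\le a\}$ in a single step: near the axis $\R\times\{0\}$ the entry time $T(s,u)$ blows up as $\|u\|\to 0$, since ever larger dilations are needed to make the superquadratic term $e^{-3s}\int F(e^{2s}u)$ dominate, so the scaling retraction admits no continuous extension across the axis. Deleting the axis is harmless precisely because $X$ is infinite dimensional, where $X\setminus\{0\}$ stays contractible; this is the sole place where the infinite-dimensional geometry is essential, and I expect it to be the main conceptual obstacle in making the computation rigorous.
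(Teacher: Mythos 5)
Your proof is correct and follows essentially the same route as the paper: delete the axis $\R\times\{0\}$, use the scaling flow together with Lemma \ref{Ml} (which forces transversal, hence unique and downward, crossings of the level $a$, so the entry time $\varphi$ is well defined and continuous) to deformation retract $\R\times(X\setminus\{0\})$ onto $\{\tilde J\le a\}$, and conclude from the contractibility of the punctured infinite-dimensional space. The only differences are presentational: you interpolate in $\log t$ rather than linearly in $t$, and you invoke the long exact sequences of the pair and the triple where the paper appeals directly to homotopy invariance of relative homology.
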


\begin{proof}
Let $\dot X=X\setminus\{0\}$ and consider the continuous map
\[
\R\times \dot X\times \R_{+}\ni (s, u, t)\mapsto (s, u_{t})\in \R\times \dot X.
\]
Fix $\lambda\in \ ]3, \mu[$ and $a<-M_{\lambda}$, where $M_{\lambda}\ge 0$ is given in Lemma \ref{Ml}. Then, by the previous Lemma, for any $(s, u)\in \R\times \dot X$
\[
\lim_{t\to +\infty}\tilde J(s, u_{t})=-\infty.
\]
Therefore, we infer from \eqref{Mlemma} that the implicit equation $\tilde J(s, u_{t})=a$
has a unique solution $t=\varphi(s, u)$ for any $(s, u)\in \R\times \dot X$ such that $\tilde J(s, u)>a$, and $\varphi:\{(s, u)\in \R\times \dot X: \tilde J(s, u)>a\}\to \R_{+}$
is continuous by a standard application of the implicit function theorem. The  map
\[
\Phi:[0, 1]\times \R\times \dot X\to \R\times \dot X,\qquad
 \Phi\left(\xi, (s, u)\right)=
 \begin{cases}
 \left(s, u_{1-\xi+\xi\varphi(s, u)}\right)&\text{if $\tilde J(s, u)>a$},\\
 (s, u)&\text{if $\tilde J(s, u)\le a$},
 \end{cases}
 \]
 is a deformation retract of $ \R\times \dot X$ onto $\{\tilde J \le a\}$, so that by homotopy invariance
 \[
 H_{*}(\R\times X, \{\tilde J\le a\})=H_{*}(\R\times X, \R\times \dot X).
 \]
 Since $\R\times \dot X$ deformation retracts to $\{0\}\times S^{\infty}$, which is contractible in itself, we get the claim.
\end{proof}

\noindent
Recall that  $X_{+}$, $X_{-}$ and $X_{0}$ are the negative, positive and null eigenspaces of the bilinear form defined in \eqref{defQ}.

\begin{lemma}
Assume $(f_{1})$-$(f_{2})$, $(V_{0})$-$(V_{2})$ and consider the decomposition $\R\times X=\tilde X_{-}\oplus \tilde X_{+}$ where
\[
\tilde X_{-}=X_{-}\oplus X_{0},\qquad \tilde X_{+}=\R\oplus X_{+}.
\]
Then the functional $\tilde J$ has a local linking in the following cases
\begin{enumerate}
\item
$\dim  X_{-}>0$, $\dim X_{0}=0$.
\item
$\dim X_{0}>0$ and $F(t)\ge c\, |t|^{\nu}$ for some $\nu<4$.
\end{enumerate}
\end{lemma}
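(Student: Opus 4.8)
The plan is to verify the two defining inequalities of a local linking at the origin directly, treating the positive subspace $\tilde X_+=\R\oplus X_+$ and the negative subspace $\tilde X_-=X_-\oplus X_0$ separately. The decisive structural simplification is that $\tilde X_-$ sits inside the slice $\{0\}\times X$, so that along it the augmented variable is frozen at $s=0$ and $\tilde J(0,u)=J(u)$; thus on $\tilde X_-$ the analysis reduces to the one already carried out for $J$ in the proof of Theorem \ref{thc}, while the genuinely new work concerns $\tilde X_+$, where scaling enters.

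For the negative part, take $u=u_-+u_0\in X_-\oplus X_0$. By \eqref{pm} one has $Q(u,u)=Q(u_-,u_-)\le-\lambda_-\|u_-\|^2$ since $Q$ vanishes on $X_0$, while \eqref{ct} bounds the nonlocal term by $C\|u\|^4$. In case (1), where $X_0=0$ and $\tilde X_-=X_-$, the sign condition $F\ge0$ from $(f_2)$ gives $J(u)\le-\lambda_-\|u\|^2+C\|u\|^4\le0$ on a small ball. In case (2), where $Q$ degenerates on $X_0$, I would invoke the extra hypothesis $F(t)\ge c|t|^\nu$: as $X_-\oplus X_0$ is finite dimensional all norms are equivalent, whence $\int F(u)\ge\tilde c\,\|u\|^\nu$ and
\[
J(u)\le-\lambda_-\|u_-\|^2+C\|u\|^4-\tilde c\,\|u\|^\nu .
\]
Because $\nu<4$, the negative $\nu$-power dominates the quartic nonlocal term for $\|u\|$ small, forcing $J(u)\le0$.

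For the positive part, fix $(s,w)\in\tilde X_+$ with $w\in X_+$ and write, using $\frac{\mathrm d}{\mathrm d\sigma}J(w_{e^\sigma})=\partial_s\tilde J(\sigma,w)-\sigma$,
\[
\tilde J(s,w)=\frac{s^2}{2}+J(w)+\int_0^s\big(\partial_s\tilde J(\sigma,w)-\sigma\big)\,\mathrm d\sigma .
\]
Here $J(w)\ge\lambda_+\|w\|^2-o(\|w\|^2)$ for $w\in X_+$, since $Q(w,w)\ge\lambda_+\|w\|^2$ by \eqref{pm}, the nonlocal term is nonnegative, and $\int F(w)=o(\|w\|^2)$ because $F(t)\le C|t|^\mu$ near $0$ with $\mu>2$ (from \eqref{mu0}) together with the embeddings $X\hookrightarrow L^r$. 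The key observation is that \emph{every} summand of $\partial_s\tilde J(\sigma,w)-\sigma$ in \eqref{tjs} is at least quadratic in $w$: the gradient and nonlocal terms are $O(\|w\|^2)$ and $O(\|w\|^4)$, the potential term is bounded by $C\,e^{\kappa|\sigma|}\int\tilde V\,w^2=O(\|w\|^2)$ through \eqref{f2}, and the reaction term by $C(e^{4\sigma}|w|^2+\dots)$ through \eqref{f1}, with constants uniform for $|\sigma|\le\rho$. Hence the integral is $O(|s|\,\|w\|^2)$, and for $\rho$ small enough
\[
\tilde J(s,w)\ge\frac{s^2}{2}+\lambda_+\|w\|^2-o(\|w\|^2)-C\rho\,\|w\|^2\ge\frac{s^2}{2}+\frac{\lambda_+}{2}\|w\|^2>0
\]
for $0<s^2+\|w\|^2\le\rho^2$, the term $s^2/2$ covering the degenerate direction $w=0$.

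The main obstacle is exactly this positive-part estimate: one must rule out that the scaling $w\mapsto w_{e^s}$ drags $J$ below zero for small $s$, despite the indefiniteness of $V$ and the fact that $w_{e^s}$ generally leaves $X_+$. The device that makes it work is to avoid a Taylor expansion of $J(w_{e^s})$ in $s$ (which would require differentiating $f$ at the origin, information we do not have), and instead to integrate the explicit derivative \eqref{tjs}, observing that it contains no term of order lower than $\|w\|^2$; the whole scaling correction is then of size $O(|s|\,\|w\|^2)$ and is absorbed by the spectral gap $\lambda_+$ once $\rho$ is chosen small.
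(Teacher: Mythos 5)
Your proposal is correct and takes essentially the same route as the paper's proof: the same splitting with $\tilde X_-=X_-\oplus X_0\subset\{0\}\times X$ (so that the negative part reduces verbatim to the computation in Theorem \ref{thc}, with $\nu<4$ beating the quartic nonlocal term, and $F\ge 0$ sufficing when $X_0=0$), and on $\R\oplus X_+$ the same ingredients — the spectral gap \eqref{pm}, positivity of the $\phi_u$-term, $(f_1)$/\eqref{mu0} giving $\int F(u)\,\mathrm{d}x=o(\|u\|^2)$, and the $(V_2)$-based dilation bound of Lemma \ref{l1} — to show the scaling correction is absorbable. The only cosmetic difference is bookkeeping: you obtain the correction by integrating $\partial_s\tilde J$ from \eqref{tjs} in the variable $s$, whereas the paper expands \eqref{tj} directly, applying the fundamental theorem of calculus only to the potential term; both give an error of size $O(|s|\,\|u\|^2)=o(\|(s,u)\|^2)$, absorbed by $\frac{s^2}{2}+\lambda_+\|u\|^2$.
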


\begin{proof}
We first show that for some $r>0$
\begin{equation}
\label{link1}
\tilde{J}(s,u)>0\qquad\qquad \text{for $\left\vert s\right\vert
<r$, \quad $u\in  X_{+}$, \quad $\left\Vert u\right\Vert <r$.}
\end{equation}
We have
\begin{align*}
\int V(xe^{-s})\, u^{2}\, \mathrm{d}x &  =\int V\, u^{2}\, \mathrm{d}x+\int  \left( \int_{0}^{1}\frac{\mathrm{d}}{\mathrm{d}\tau}
V(e^{-s\tau}x)\, \mathrm{d}\tau\right) u^{2}\, \mathrm{d}x\\
&  =\int V\, u^{2}\, \mathrm{d}x-s\int \left(\int_{0}^{1}\nabla V(e^{-s\tau
}x)\cdot (e^{-s\tau}x )\, \mathrm{d}\tau\right) u^{2}\, \mathrm{d}x\text{,}
\end{align*}
therefore
\begin{align*}
\left\vert \int V(xe^{-s})\, u^{2}\, \mathrm{d}x-\int V\, u^{2}\, \mathrm{d}x\right\vert  &  \le\left\vert s\right\vert \int  \left(\int_{0}^{1}\left\vert \nabla V(e^{-s\tau}x)\cdot (e^{-s\tau}x)  \right\vert\,
\mathrm{d}\tau  \right)u^{2}\, \mathrm{d}x\\
& \le \kappa \left\vert s\right\vert \int \left(\int_{0}^{1}\tilde
{V}(e^{-s\tau}x)\, \mathrm{d}\tau\right)  u^{2}\, \mathrm{d}x\tag*{by $(V_{2})$}\\
&\le\kappa\left\vert s\right\vert \int \left(\int_{0}^{1}e^{\kappa |s|\tau
}\tilde{V}(x)\, \mathrm{d}\tau\right) u^{2}\, \mathrm{d}x\tag*{by \eqref{el1}}\\
& \le \left(  e^{\kappa |s|}-1\right)  \int\tilde
{V}\, u^{2}\, \mathrm{d}x\le O(s)\left\Vert u\right\Vert
^{2}=o (\left\Vert \left(  s,u\right)  \right\Vert ^{2})
\end{align*}
since, as $\left(  s,b\right)  \rightarrow (0, 0)$, $sb^{2}=o(|(s,b)|^{2})$.
Moreover, $(f_{1})$ and \eqref{mu0} imply that
\[
|F(t)|\le C\left(|t|^{\mu}+|t|^{p+1}\right),
\]
so that, for $\|(s,u)\|\to 0$,  we have
\[
e^{-3s}\left\vert\int F(e^{2s}u)\, \mathrm{d}x\right\vert\le C\int |u|^{\mu}+|u|^{p+1}\, \mathrm{d}x\le o(  \left\Vert u\right\Vert ^{2}),
\]
while by \eqref{em} we easily have $|V|\le2\tilde{V}$, hence
\[
|s|\int |V| \, u^{2}\, \mathrm{d}x\le 2|s|\int \tilde V\, u^{2}\, \mathrm{d}x=o(\|(s, u)\|^{2}).
\]
Gathering these estimates and recalling \eqref{ct}, we get
\begin{align*}
\tilde{J}(s,u) &   =\frac{s^{2}}{2}+\frac{e^{3s}}{2}\int\Big[\left\vert \nabla u\right\vert ^{2}+\frac
{1}{2}\phi_{u}\, u^{2}\Big]\, \mathrm{d}x  +\frac{e^{s}}{2}\int V(xe^{-s})\, u^{2}\, \mathrm{d}x-e^{-3s}
\int F(e^{2s}u)\, \mathrm{d}x\\
&  =\frac{s^{2}}{2} + \frac{1+O(s)}{2}\int\left\vert \nabla u\right\vert
^{2}\, \mathrm{d}x  +\frac{1+O(s)}{2}
\int V\, u^{2}\, \mathrm{d}x\\
&\qquad\qquad\qquad\qquad\qquad\qquad\quad+ O(\left\Vert u\right\Vert ^{4})+o(  \left\Vert \left(  s,u\right)  \right\Vert
^{2}) +o(  \left\Vert u\right\Vert^{2})  \\
&  =\frac{1}{2}\int \left[\left\vert \nabla u\right\vert ^{2}+V\, u^{2}\right]
\, \mathrm{d}x  +\frac{s^{2}}{2}+O(  \left\Vert u\right\Vert ^{4})
+o(  \left\Vert \left(  s,u\right)  \right\Vert ^{2}).
\end{align*}
The latter readily yelds \eqref{link1} and, for $s=0$,
\[
\tilde J(0, u)<0\qquad \qquad \text{for $u\in X_{-}$,\quad $\|u\|<r$},
\]
proving the claimed local linking in case (1).

In  case (2) we proceed as in Theorem \ref{thc}: the previous computations yield
\[
\tilde J(0, u)\le  \frac{1}{2}\int  \left[\left\vert \nabla u\right\vert ^{2}+V\, u^{2}\right]\, \mathrm{d}x  +O(  \left\Vert u\right\Vert ^{4}) -c\int |u|^{\nu} \, \mathrm{d}x,
\]
and being all norms in $X_{-}\oplus X_{0}$ equivalent we deduce
\[
\tilde J(0, u)\le -\lambda_{-}\|u_{-}\|^{2}+O(  \left\Vert u\right\Vert ^{4})-c'\, \|u\|^{\nu}\qquad \text{for $u\in X_{-}\oplus X_{0}$}.
\]
Thanks to $\nu<4$, we infer
\[
\tilde J(0, u)<0\qquad \qquad \text{for $u\in X_{-}\oplus X_{0}$,\quad $\|u\|<r$},
\]
concluding the proof in this case.
\end{proof}

\begin{theorem}
Assume $(f_{1})$-$(f_{2})$, $(V_{0})$-$(V_{2})$ and either
\begin{enumerate}
\item
$\dim  X_{-}>0$, $\dim X_{0}=0$.
\item
$\dim  X_{0}>0$ and $F(t)\ge c\, |t|^{\nu}$ for some $\nu<4$ .
\end{enumerate}
Then problem \eqref{e1} has at least a nontrivial solution.
\end{theorem}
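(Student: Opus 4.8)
The plan is to run a Morse-theoretic argument entirely on the augmented functional $\tilde J$ on $\R\times X$, exploiting the four ingredients already assembled: $\tilde J$ is $C^1$ (Proposition \ref{proptj}) and satisfies the $(PS)$ condition unconditionally (Theorem \ref{PS}); its critical points correspond exactly to those of $J$ via $\bar s=0$ (Lemma \ref{crt}); it has a local linking at the origin with respect to $\R\times X=\tilde X_-\oplus\tilde X_+$, $\tilde X_-=X_-\oplus X_0$ (the preceding Lemma); and its homology at infinity vanishes (Theorem \ref{H}). Working with $\tilde J$ rather than $J$ is essential here, since it is $\tilde J$, not $J$, that is known to satisfy $(PS)$.

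First I would dispose of a degenerate case: if $(0,0)$ is \emph{not} an isolated critical point of $\tilde J$, then it is accumulated by critical points $(\bar s_n,\bar u_n)\neq(0,0)$, and Lemma \ref{crt} upgrades each to a nontrivial critical point of $J$, so \eqref{e1} already has a nontrivial solution. Assuming therefore that $(0,0)$ is isolated, I would invoke the local linking just proved. Since $X_-\oplus X_0$ is finite-dimensional (the spectrum $\sigma(-\Delta+V)$ being discrete and bounded below), $\tilde X_-$ has finite dimension $k:=\dim X_-+\dim X_0$, and the Morse-theoretic form of the local linking theorem (see \cite{MR1110119}) yields a nontrivial critical group in degree $k$:
\[
C_k(\tilde J,(0,0))\neq 0 .
\]

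On the other hand, Theorem \ref{H} computes the critical groups at infinity, $C_q(\tilde J,\infty):=H_q(\R\times X,\{\tilde J\le a\})$ for $a$ sufficiently negative, and shows they all vanish. Suppose, for contradiction, that $(0,0)$ is the \emph{only} critical point of $\tilde J$. Since $\tilde J\in C^1$ satisfies $(PS)$ and $\tilde J(0,0)=0$, the standard deformation lemmas of infinite-dimensional Morse theory \cite{MR1196690} let one retract $\R\times X$ onto a sublevel set above the single critical value and then down onto $\{\tilde J\le a\}$ away from $(0,0)$, giving the isomorphism
\[
C_q(\tilde J,(0,0))\cong C_q(\tilde J,\infty)\qquad\text{for all }q .
\]
Evaluating at $q=k$ forces $0\neq C_k(\tilde J,(0,0))=C_k(\tilde J,\infty)=0$, a contradiction. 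Hence $\tilde J$ has a critical point $(\bar s,\bar u)\neq(0,0)$, and by Lemma \ref{crt} this gives $\bar s=0$ and $DJ(\bar u)=0$ with $\bar u\neq0$; that is, $\bar u$ is a nontrivial solution of \eqref{e1}, proving the theorem in both cases (1) and (2).

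The step deserving most care, and the one I expect to be the main obstacle, is the passage from the local linking to $C_k(\tilde J,(0,0))\neq0$: it relies on the finite-dimensionality of $\tilde X_-$ and on a version of the local linking theorem valid for functionals that are merely $C^1$ and satisfy $(PS)$. The dichotomy between hypotheses (1) and (2) enters precisely in guaranteeing that the negative part of the local linking decomposition is exactly $\tilde X_-=X_-\oplus X_0$ (in case (2) the extra lower bound $F(t)\ge c|t|^\nu$ with $\nu<4$ is what pushes $\tilde J$ negative along the degenerate directions $X_0$ near the origin), which is what the preceding Lemma establishes and what fixes the degree $k$ used above.
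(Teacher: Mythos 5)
Your proposal is correct and takes essentially the same route as the paper: the paper's proof simply feeds Theorem \ref{PS}, Theorem \ref{H} and the local linking Lemma into the abstract critical point result of \cite[Corollary 2.3]{MR1749421} and then applies Lemma \ref{crt} to transfer the nontrivial critical point $(\bar s,\bar u)$ of $\tilde J$ to a nontrivial critical point of $J$, which is exactly the set of ingredients you use. The only difference is that you unpack the cited corollary into its standard Morse-theoretic proof (the isolatedness dichotomy, $C_k(\tilde J,(0,0))\neq 0$ from the local linking with $k=\dim X_-+\dim X_0<\infty$, and the deformation isomorphism between the critical groups at the origin and the vanishing critical groups at infinity), rather than invoking it as a black box.
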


\begin{proof}
Theorems \ref{PS}, \ref{H} and the previous Lemma allow to apply \cite[Corollary 2.3]{MR1749421}, giving a critical point $(\bar s, \bar u)\ne (0, 0)$ for $\tilde J$. But then Lemma \ref{crt} forces $\bar s=0$, $\bar u\ne 0$ and $DJ(\bar u)=0$.
\end{proof}

\section*{References}

\bibliography{ref}

\begin{thebibliography}{47}
\expandafter\ifx\csname natexlab\endcsname\relax\def\natexlab#1{#1}\fi
\providecommand{\url}[1]{\texttt{#1}}
\providecommand{\href}[2]{#2}
\providecommand{\path}[1]{#1}
\providecommand{\DOIprefix}{doi:}
\providecommand{\ArXivprefix}{arXiv:}
\providecommand{\URLprefix}{URL: }
\providecommand{\Pubmedprefix}{pmid:}
\providecommand{\doi}[1]{\href{http://dx.doi.org/#1}{\path{#1}}}
\providecommand{\Pubmed}[1]{\href{pmid:#1}{\path{#1}}}
\providecommand{\bibinfo}[2]{#2}
\ifx\xfnm\relax \def\xfnm[#1]{\unskip,\space#1}\fi
%Type = Article
\bibitem[{Erd\H{o}s et~al.(2010)Erd\H{o}s, Schlein, and Yau}]{MR2680421}
\bibinfo{author}{L.~Erd\H{o}s}, \bibinfo{author}{B.~Schlein},
  \bibinfo{author}{H.-T. Yau}, \bibinfo{journal}{Ann. of Math. (2)}
  \bibinfo{volume}{172} (\bibinfo{year}{2010}) \bibinfo{pages}{291--370}.
  \DOIprefix\doi{10.4007/annals.2010.172.291}.
%Type = Book
\bibitem[{Lieb et~al.(2005)Lieb, Seiringer, Solovej, and Yngvason}]{MR2143817}
\bibinfo{author}{E.~H. Lieb}, \bibinfo{author}{R.~Seiringer},
  \bibinfo{author}{J.~P. Solovej}, \bibinfo{author}{J.~Yngvason},
  \bibinfo{title}{The mathematics of the {B}ose gas and its condensation},
  volume~\bibinfo{volume}{34} of \textit{\bibinfo{series}{Oberwolfach
  Seminars}}, \bibinfo{publisher}{Birkh\"{a}user Verlag, Basel},
  \bibinfo{year}{2005}.
%Type = Article
\bibitem[{Bardos et~al.(2002)Bardos, Erd{\H{o}}s, Golse, Mauser, and
  Yau}]{MR1890644}
\bibinfo{author}{C.~Bardos}, \bibinfo{author}{L.~Erd{\H{o}}s},
  \bibinfo{author}{F.~Golse}, \bibinfo{author}{N.~Mauser},
  \bibinfo{author}{H.-T. Yau}, \bibinfo{journal}{C. R. Math. Acad. Sci. Paris}
  \bibinfo{volume}{334} (\bibinfo{year}{2002}) \bibinfo{pages}{515--520}.
  \DOIprefix\doi{10.1016/S1631-073X(02)02253-7}.
%Type = Article
\bibitem[{Benci and Fortunato(1998)}]{MR1659454}
\bibinfo{author}{V.~Benci}, \bibinfo{author}{D.~Fortunato},
  \bibinfo{journal}{Topol. Methods Nonlinear Anal.} \bibinfo{volume}{11}
  (\bibinfo{year}{1998}) \bibinfo{pages}{283--293}.
  \DOIprefix\doi{10.12775/TMNA.1998.019}.
%Type = Article
\bibitem[{Benci et~al.(1999)Benci, Fortunato, Masiello, and Pisani}]{MR1714281}
\bibinfo{author}{V.~Benci}, \bibinfo{author}{D.~Fortunato},
  \bibinfo{author}{A.~Masiello}, \bibinfo{author}{L.~Pisani},
  \bibinfo{journal}{Math. Z.} \bibinfo{volume}{232} (\bibinfo{year}{1999})
  \bibinfo{pages}{73--102}. \DOIprefix\doi{10.1007/PL00004759}.
%Type = Article
\bibitem[{Chen and Tang(2009)}]{MR2548724}
\bibinfo{author}{S.-J. Chen}, \bibinfo{author}{C.-L. Tang},
  \bibinfo{journal}{Nonlinear Anal.} \bibinfo{volume}{71}
  (\bibinfo{year}{2009}) \bibinfo{pages}{4927--4934}.
  \DOIprefix\doi{10.1016/j.na.2009.03.050}.
%Type = Article
\bibitem[{Ruiz(2006)}]{MR2230354}
\bibinfo{author}{D.~Ruiz}, \bibinfo{journal}{J. Funct. Anal.}
  \bibinfo{volume}{237} (\bibinfo{year}{2006}) \bibinfo{pages}{655--674}.
  \DOIprefix\doi{10.1016/j.jfa.2006.04.005}.
%Type = Article
\bibitem[{Wang and Zhou(2007)}]{MR2318269}
\bibinfo{author}{Z.~Wang}, \bibinfo{author}{H.-S. Zhou},
  \bibinfo{journal}{Discrete Contin. Dyn. Syst.} \bibinfo{volume}{18}
  (\bibinfo{year}{2007}) \bibinfo{pages}{809--816}.
  \DOIprefix\doi{10.3934/dcds.2007.18.809}.
%Type = Article
\bibitem[{Sun et~al.(2011)Sun, Chen, and Yang}]{MR2733219}
\bibinfo{author}{J.~Sun}, \bibinfo{author}{H.~Chen}, \bibinfo{author}{L.~Yang},
  \bibinfo{journal}{Nonlinear Anal.} \bibinfo{volume}{74}
  (\bibinfo{year}{2011}) \bibinfo{pages}{413--423}.
  \DOIprefix\doi{10.1016/j.na.2010.08.052}.
%Type = Article
\bibitem[{Mercuri(2008)}]{MR2439518}
\bibinfo{author}{C.~Mercuri}, \bibinfo{journal}{Atti Accad. Naz. Lincei Cl.
  Sci. Fis. Mat. Natur. Rend. Lincei (9) Mat. Appl.} \bibinfo{volume}{19}
  (\bibinfo{year}{2008}) \bibinfo{pages}{211--227}.
  \DOIprefix\doi{10.4171/RLM/520}.
%Type = Article
\bibitem[{Liu and Huang(2014)}]{MR3128425}
\bibinfo{author}{Z.~Liu}, \bibinfo{author}{Y.~Huang}, \bibinfo{journal}{J.
  Math. Anal. Appl.} \bibinfo{volume}{411} (\bibinfo{year}{2014})
  \bibinfo{pages}{693--706}. \DOIprefix\doi{10.1016/j.jmaa.2013.10.023}.
%Type = Article
\bibitem[{Azzollini and Pomponio(2008)}]{MR2422637}
\bibinfo{author}{A.~Azzollini}, \bibinfo{author}{A.~Pomponio},
  \bibinfo{journal}{J. Math. Anal. Appl.} \bibinfo{volume}{345}
  (\bibinfo{year}{2008}) \bibinfo{pages}{90--108}.
  \DOIprefix\doi{10.1016/j.jmaa.2008.03.057}.
%Type = Article
\bibitem[{Zhao and Zhao(2008)}]{MR2428280}
\bibinfo{author}{L.~Zhao}, \bibinfo{author}{F.~Zhao}, \bibinfo{journal}{J.
  Math. Anal. Appl.} \bibinfo{volume}{346} (\bibinfo{year}{2008})
  \bibinfo{pages}{155--169}. \DOIprefix\doi{10.1016/j.jmaa.2008.04.053}.
%Type = Article
\bibitem[{Alves et~al.(2011)Alves, Souto, and Soares}]{MR2769159}
\bibinfo{author}{C.~O. Alves}, \bibinfo{author}{M.~A.~S. Souto},
  \bibinfo{author}{S.~H.~M. Soares}, \bibinfo{journal}{J. Math. Anal. Appl.}
  \bibinfo{volume}{377} (\bibinfo{year}{2011}) \bibinfo{pages}{584--592}.
  \DOIprefix\doi{10.1016/j.jmaa.2010.11.031}.
%Type = Article
\bibitem[{Sun and Ma(2016)}]{MR3427661}
\bibinfo{author}{J.~Sun}, \bibinfo{author}{S.~Ma}, \bibinfo{journal}{J.
  Differential Equations} \bibinfo{volume}{260} (\bibinfo{year}{2016})
  \bibinfo{pages}{2119--2149}. \DOIprefix\doi{10.1016/j.jde.2015.09.057}.
%Type = Article
\bibitem[{Jiang and Zhou(2011)}]{MR2802025}
\bibinfo{author}{Y.~Jiang}, \bibinfo{author}{H.-S. Zhou}, \bibinfo{journal}{J.
  Differential Equations} \bibinfo{volume}{251} (\bibinfo{year}{2011})
  \bibinfo{pages}{582--608}. \DOIprefix\doi{10.1016/j.jde.2011.05.006}.
%Type = Article
\bibitem[{Cerami and Vaira(2010)}]{MR2557904}
\bibinfo{author}{G.~Cerami}, \bibinfo{author}{G.~Vaira}, \bibinfo{journal}{J.
  Differential Equations} \bibinfo{volume}{248} (\bibinfo{year}{2010})
  \bibinfo{pages}{521--543}. \DOIprefix\doi{10.1016/j.jde.2009.06.017}.
%Type = Article
\bibitem[{Cerami and Molle(2016)}]{MR3551056}
\bibinfo{author}{G.~Cerami}, \bibinfo{author}{R.~Molle},
  \bibinfo{journal}{Nonlinearity} \bibinfo{volume}{29} (\bibinfo{year}{2016})
  \bibinfo{pages}{3103--3119}. \DOIprefix\doi{10.1088/0951-7715/29/10/3103}.
%Type = Article
\bibitem[{Cerami and Molle(2018)}]{CM1}
\bibinfo{author}{G.~Cerami}, \bibinfo{author}{R.~Molle},
  \bibinfo{journal}{ESAIM: Control, Optimisation and Calculus of Variations}
  (\bibinfo{year}{2018}) \bibinfo{pages}{in press}.
  \DOIprefix\doi{10.1051/cocv/2018071}.
%Type = Article
\bibitem[{Sun et~al.(2016)Sun, Wu, and Feng}]{MR3411683}
\bibinfo{author}{J.~Sun}, \bibinfo{author}{T.-f. Wu},
  \bibinfo{author}{Z.~Feng}, \bibinfo{journal}{J. Differential Equations}
  \bibinfo{volume}{260} (\bibinfo{year}{2016}) \bibinfo{pages}{586--627}.
  \DOIprefix\doi{10.1016/j.jde.2015.09.002}.
%Type = Article
\bibitem[{Ianni(2013)}]{MR3114313}
\bibinfo{author}{I.~Ianni}, \bibinfo{journal}{Topol. Methods Nonlinear Anal.}
  \bibinfo{volume}{41} (\bibinfo{year}{2013}) \bibinfo{pages}{365--385}.
%Type = Article
\bibitem[{Kim and Seok(2012)}]{MR2989645}
\bibinfo{author}{S.~Kim}, \bibinfo{author}{J.~Seok}, \bibinfo{journal}{Commun.
  Contemp. Math.} \bibinfo{volume}{14} (\bibinfo{year}{2012})
  \bibinfo{pages}{1250041, 16}. \DOIprefix\doi{10.1142/S0219199712500411}.
%Type = Article
\bibitem[{Wang and Zhou(2015)}]{MR3311919}
\bibinfo{author}{Z.~Wang}, \bibinfo{author}{H.-S. Zhou},
  \bibinfo{journal}{Calc. Var. Partial Differential Equations}
  \bibinfo{volume}{52} (\bibinfo{year}{2015}) \bibinfo{pages}{927--943}.
  \DOIprefix\doi{10.1007/s00526-014-0738-5}.
%Type = Article
\bibitem[{Bartsch and Weth(2005)}]{MR2136244}
\bibinfo{author}{T.~Bartsch}, \bibinfo{author}{T.~Weth}, \bibinfo{journal}{Ann.
  Inst. H. Poincar\'e Anal. Non Lin\'eaire} \bibinfo{volume}{22}
  (\bibinfo{year}{2005}) \bibinfo{pages}{259--281}.
  \DOIprefix\doi{10.1016/j.anihpc.2004.07.005}.
%Type = Article
\bibitem[{Liu et~al.(2016)Liu, Wang, and Zhang}]{MR3500305}
\bibinfo{author}{Z.~Liu}, \bibinfo{author}{Z.-Q. Wang},
  \bibinfo{author}{J.~Zhang}, \bibinfo{journal}{Ann. Mat. Pura Appl. (4)}
  \bibinfo{volume}{195} (\bibinfo{year}{2016}) \bibinfo{pages}{775--794}.
  \DOIprefix\doi{10.1007/s10231-015-0489-8}.
%Type = Article
\bibitem[{Alves et~al.(2017)Alves, Souto, and Soares}]{MR3619755}
\bibinfo{author}{C.~O. Alves}, \bibinfo{author}{M.~A.~S. Souto},
  \bibinfo{author}{S.~H.~M. Soares}, \bibinfo{journal}{Rocky Mountain J. Math.}
  \bibinfo{volume}{47} (\bibinfo{year}{2017}) \bibinfo{pages}{1--25}.
  \DOIprefix\doi{10.1216/RMJ-2017-47-1-1}.
%Type = Article
\bibitem[{Kryszewski and Szulkin(1998)}]{MR1751952}
\bibinfo{author}{W.~Kryszewski}, \bibinfo{author}{A.~Szulkin},
  \bibinfo{journal}{Adv. Differential Equations} \bibinfo{volume}{3}
  (\bibinfo{year}{1998}) \bibinfo{pages}{441--472}.
%Type = Article
\bibitem[{Liu(2012)}]{MR2957647}
\bibinfo{author}{S.~Liu}, \bibinfo{journal}{Calc. Var. Partial Differential
  Equations} \bibinfo{volume}{45} (\bibinfo{year}{2012}) \bibinfo{pages}{1--9}.
  \DOIprefix\doi{10.1007/s00526-011-0447-2}.
%Type = Article
\bibitem[{Chen and Liu(2015)}]{MR3303004}
\bibinfo{author}{H.~Chen}, \bibinfo{author}{S.~Liu}, \bibinfo{journal}{Ann.
  Mat. Pura Appl. (4)} \bibinfo{volume}{194} (\bibinfo{year}{2015})
  \bibinfo{pages}{43--53}. \DOIprefix\doi{10.1007/s10231-013-0363-5}.
%Type = Article
\bibitem[{Zhao et~al.(2013)Zhao, Liu, and Zhao}]{MR3045632}
\bibinfo{author}{L.~Zhao}, \bibinfo{author}{H.~Liu}, \bibinfo{author}{F.~Zhao},
  \bibinfo{journal}{J. Differential Equations} \bibinfo{volume}{255}
  (\bibinfo{year}{2013}) \bibinfo{pages}{1--23}.
  \DOIprefix\doi{10.1016/j.jde.2013.03.005}.
%Type = Article
\bibitem[{Ye and Tang(2015)}]{MR3336325}
\bibinfo{author}{Y.~Ye}, \bibinfo{author}{C.-L. Tang}, \bibinfo{journal}{Calc.
  Var. Partial Differential Equations} \bibinfo{volume}{53}
  (\bibinfo{year}{2015}) \bibinfo{pages}{383--411}.
  \DOIprefix\doi{10.1007/s00526-014-0753-6}.
%Type = Article
\bibitem[{Li and Willem(1995)}]{MR1312028}
\bibinfo{author}{S.~J. Li}, \bibinfo{author}{M.~Willem}, \bibinfo{journal}{J.
  Math. Anal. Appl.} \bibinfo{volume}{189} (\bibinfo{year}{1995})
  \bibinfo{pages}{6--32}. \DOIprefix\doi{10.1006/jmaa.1995.1002}.
%Type = Article
\bibitem[{Luan and Mao(2005)}]{MR2135818}
\bibinfo{author}{S.~Luan}, \bibinfo{author}{A.~Mao},
  \bibinfo{journal}{Nonlinear Anal.} \bibinfo{volume}{61}
  (\bibinfo{year}{2005}) \bibinfo{pages}{1413--1426}.
  \DOIprefix\doi{10.1016/j.na.2005.01.108}.
%Type = Article
\bibitem[{Liu and Wu(2017)}]{MR3656292}
\bibinfo{author}{S.~Liu}, \bibinfo{author}{Y.~Wu}, \bibinfo{journal}{Bull.
  Lond. Math. Soc.} \bibinfo{volume}{49} (\bibinfo{year}{2017})
  \bibinfo{pages}{226--234}. \DOIprefix\doi{10.1112/blms.12019}.
%Type = Book
\bibitem[{Chang(1993)}]{MR1196690}
\bibinfo{author}{K.-C. Chang}, \bibinfo{title}{Infinite-dimensional {M}orse
  theory and multiple solution problems}, Progress in Nonlinear Differential
  Equations and their Applications, 6, \bibinfo{publisher}{Birkh\"auser Boston,
  Inc., Boston, MA}, \bibinfo{year}{1993}.
  \DOIprefix\doi{10.1007/978-1-4612-0385-8}.
%Type = Book
\bibitem[{Mawhin and Willem(1989)}]{MR982267}
\bibinfo{author}{J.~Mawhin}, \bibinfo{author}{M.~Willem},
  \bibinfo{title}{Critical point theory and {H}amiltonian systems},
  volume~\bibinfo{volume}{74} of \textit{\bibinfo{series}{Applied Mathematical
  Sciences}}, \bibinfo{publisher}{Springer-Verlag}, \bibinfo{address}{New
  York}, \bibinfo{year}{1989}.
%Type = Article
\bibitem[{Liu(1989)}]{MR1110119}
\bibinfo{author}{J.~Q. Liu}, \bibinfo{journal}{Systems Sci. Math. Sci.}
  \bibinfo{volume}{2} (\bibinfo{year}{1989}) \bibinfo{pages}{32--39}.
%Type = Article
\bibitem[{Szulkin and Weth(2009)}]{MR2557725}
\bibinfo{author}{A.~Szulkin}, \bibinfo{author}{T.~Weth}, \bibinfo{journal}{J.
  Funct. Anal.} \bibinfo{volume}{257} (\bibinfo{year}{2009})
  \bibinfo{pages}{3802--3822}. \DOIprefix\doi{10.1016/j.jfa.2009.09.013}.
%Type = Article
\bibitem[{Jeanjean(1999)}]{MR1718530}
\bibinfo{author}{L.~Jeanjean}, \bibinfo{journal}{Proc. Roy. Soc. Edinburgh
  Sect. A} \bibinfo{volume}{129} (\bibinfo{year}{1999})
  \bibinfo{pages}{787--809}. \DOIprefix\doi{10.1017/S0308210500013147}.
%Type = Article
\bibitem[{Szulkin and Zou(2001)}]{MR1867339}
\bibinfo{author}{A.~Szulkin}, \bibinfo{author}{W.~Zou}, \bibinfo{journal}{J.
  Funct. Anal.} \bibinfo{volume}{187} (\bibinfo{year}{2001})
  \bibinfo{pages}{25--41}. \DOIprefix\doi{10.1006/jfan.2001.3798}.
%Type = Article
\bibitem[{Jeanjean(1997)}]{MR1430506}
\bibinfo{author}{L.~Jeanjean}, \bibinfo{journal}{Nonlinear Anal.}
  \bibinfo{volume}{28} (\bibinfo{year}{1997}) \bibinfo{pages}{1633--1659}.
  \DOIprefix\doi{10.1016/S0362-546X(96)00021-1}.
%Type = Article
\bibitem[{Bartsch and Wang(1995)}]{MR1349229}
\bibinfo{author}{T.~Bartsch}, \bibinfo{author}{Z.~Q. Wang},
  \bibinfo{journal}{Comm. Partial Differential Equations} \bibinfo{volume}{20}
  (\bibinfo{year}{1995}) \bibinfo{pages}{1725--1741}.
  \DOIprefix\doi{10.1080/03605309508821149}.
%Type = Article
\bibitem[{Liu and Su(2001)}]{MR1828101}
\bibinfo{author}{J.~Liu}, \bibinfo{author}{J.~Su}, \bibinfo{journal}{J. Math.
  Anal. Appl.} \bibinfo{volume}{258} (\bibinfo{year}{2001})
  \bibinfo{pages}{209--222}. \DOIprefix\doi{10.1006/jmaa.2000.7374}.
%Type = Article
\bibitem[{Berestycki and Lions(1983{\natexlab{a}})}]{MR695535}
\bibinfo{author}{H.~Berestycki}, \bibinfo{author}{P.-L. Lions},
  \bibinfo{journal}{Arch. Rational Mech. Anal.} \bibinfo{volume}{82}
  (\bibinfo{year}{1983}{\natexlab{a}}) \bibinfo{pages}{313--345}.
  \DOIprefix\doi{10.1007/BF00250555}.
%Type = Article
\bibitem[{Berestycki and Lions(1983{\natexlab{b}})}]{MR695536}
\bibinfo{author}{H.~Berestycki}, \bibinfo{author}{P.-L. Lions},
  \bibinfo{journal}{Arch. Rational Mech. Anal.} \bibinfo{volume}{82}
  (\bibinfo{year}{1983}{\natexlab{b}}) \bibinfo{pages}{347--375}.
  \DOIprefix\doi{10.1007/BF00250556}.
%Type = Article
\bibitem[{D'Aprile and Mugnai(2004)}]{MR2079817}
\bibinfo{author}{T.~D'Aprile}, \bibinfo{author}{D.~Mugnai},
  \bibinfo{journal}{Adv. Nonlinear Stud.} \bibinfo{volume}{4}
  (\bibinfo{year}{2004}) \bibinfo{pages}{307--322}.
  \DOIprefix\doi{10.1515/ans-2004-0305}.
%Type = Article
\bibitem[{Perera(1998)}]{MR1749421}
\bibinfo{author}{K.~Perera}, \bibinfo{journal}{Abstr. Appl. Anal.}
  \bibinfo{volume}{3} (\bibinfo{year}{1998}) \bibinfo{pages}{437--446}.
  \DOIprefix\doi{10.1155/S1085337598000657}.

\end{thebibliography}

\end{document}